\let\oldabs\abs
\def\abs{\@ifstar{\oldabs}{\oldabs*}}
\newcommand{\Var}{\mathbf{Var}}
\begin{document}

\title{Nowhere vanishing $1$-forms on varieties admitting a good minimal model}
\author{Benjamin Church}
\address{Department of Mathematics, Stanford University, 
380 Jane Stanford Way, CA 94086} 
\email{{\tt bvchurch@stanford.edu}}

\thanks{}
\date{\today}

\subjclass[2020]{Primary: 14J10. Secondary: 14D06, 14K05.}

\begin{abstract}
We prove several conjectures relating the existence of nonvanishing $1$-forms to smooth morphisms over abelian varieties, assuming the existence of good minimal models. The proof involves a decomposition result for a family of Calabi-Yau varieties equipped with a surjective map to an abelian scheme. In the uniruled case, supposing the MRC base admits a good minimal model, we also achieve a structure theorem for those varieties admitting nowhere vanishing $1$-forms.
\end{abstract}

\maketitle

\makeatletter
\newcommand\@dotsep{4.5}
\def\@tocline#1#2#3#4#5#6#7{\relax
  \ifnum #1>\c@tocdepth 
  \else
    \par \addpenalty\@secpenalty\addvspace{#2}%
    \begingroup \hyphenpenalty\@M
    \@ifempty{#4}{%
      \@tempdima\csname r@tocindent\number#1\endcsname\relax
    }{%
      \@tempdima#4\relax
    }%
    \parindent\z@ \leftskip#3\relax
    \advance\leftskip\@tempdima\relax
    \rightskip\@pnumwidth plus1em \parfillskip-\@pnumwidth
    #5\leavevmode\hskip-\@tempdima #6\relax
    \leaders\hbox{$\m@th
      \mkern \@dotsep mu\hbox{.}\mkern \@dotsep mu$}\hfill
    \hbox to\@pnumwidth{\@tocpagenum{#7}}\par
    \nobreak
    \endgroup
  \fi}
\def\l@section{\@tocline{1}{0pt}{1pc}{}{\bfseries}}
\def\l@subsection{\@tocline{2}{0pt}{25pt}{5pc}{}}
\makeatother


\tableofcontents

\section{Introduction}

A theorem of Popa and Schnell \cite{PS14} answering the conjectures of Hacon and \Kovacs \cite{HK05} and Luo and Zhang \cite{LZ05} shows that if a smooth projective variety admits a $1$-form without zeros it cannot be of general type. However, we expect far more stringent constraints on those varieties of intermediate Kodaira dimension that actually do carry $1$-forms without zeros. For example, Hao and Schreieder exactly classified all $3$-folds with this property \cite{HS21(1)} and found they are always blowups of isotrivial smooth fibrations over an abelian variety. 
\par
Inspired by the case of $3$-folds, our guiding principle is that $1$-forms without zeros should be explained geometrically by the existence of a smooth map to an abelian variety. In the non-minimal case, this is not true on the nose (see Example~\ref{example:Schreieder_Yang}) and one must first pass to a birational model in order to obtain a \textit{smooth} map to an abelian variety. This principle was explored in \cite{Hao23} and \cite{CCH23} where results were obtained for large Kodaira dimension. We generalize those results (see \S\ref{section:iitaka}) and remove the assumption on the Kodaira dimension. We apply such results to prove -- assuming the main conjectures of MMP -- several conjectures in the spirit of our main principle. In particular, this resolves the conjectures unconditionally in dimension at most $4$ using the results of Fujino \cite{Fuj10} on irregular $4$-folds.
\par 
First, we prove a generalization of conjectures of Hao and Schreieder \cite[Conjecture~1.7]{HS21(1)} (in the case $\kappa(X) \ge 0$) and \cite[Conjecture~1.8]{HS21(1)} assuming the existence of a good minimal model:

\begin{Lthm}[= Theorem~\ref{thm:Iitaka_decomposition_PLI}]\label{intro:thm:Iitaka_decomposition_PLI}
Let $X \to A$ be a morphism from a smooth projective variety $X$ to an abelian variety $A$. Assume $\omega_1, \dots, \omega_g \in H^0(A, \Omega_A)$ are $1$-forms such that $f^* \omega_1, \dots, f^* \omega_g$ are pointwise linearly independent. Assuming $X$ admits a good minimal model then there is a birational model $X \rat X'$ and a quotient with connected kernel $A \onto B$ to an abelian variety of dimension $\ge g$ fitting into a diagram
\begin{center}
\begin{tikzcd}[column sep = large, row sep = large]
X \arrow[r, dashed] \arrow[d] & X' \arrow[d]
\\
A \arrow[r] & B
\end{tikzcd}
\end{center}
with $X' \to B$ an isotrivial\footnote{flat with isomorphic fibers, in fact we show that $X' \to A$ is trivialized by an isogeny} smooth morphism. 
\end{Lthm}
\noindent
In particular, when $X \to A$ is smooth we can take $g = \dim{A}$ to recover \cite[Conjecture 1.8]{HS21(1)} (= Theorem~\ref{thm:birational_to_isotrivial_map}) namely that there is a birational model 
\begin{center}
\begin{tikzcd}
X \arrow[rr, dashed] \arrow[rd] & & X' \arrow[ld]
\\
& A
\end{tikzcd}
\end{center}
with $X' \to A$ isotrivial (in particular an analytic fiber bundle).

\begin{example} \label{example:Schreieder_Yang}
The birational modification in Theorem~\ref{intro:thm:Iitaka_decomposition_PLI} (and \cite[Conjecture 1.8]{HS21(1)}) is necessary due to the following example of Schreieder and Yang \cite[pp. 2]{SY22}. Let $X$ be the blowup of $E_1 \times E_2 \times \P^1$ along $E_1 \times 0 \times 0$ and $0 \times E_2 \times \infty$ where $E_1$ and $E_2$ are non-isogenous elliptic curves. Then there is a nowhere vanishing $1$-form $\omega = \pi_1^* \omega_1 + \pi_2^* \omega_2$ where $\omega_i \in H^0(E_i, \Omega_{E_i})$ is nonzero. However, both maps $X \to E_i$ are not smooth since the fiber over $0$ or $\infty$ has two irreducible components. Since the elliptic curves are non-isogenous, $X$ does not admit a smooth map to any positive dimensional abelian variety. 
\end{example}

Hao and Schreieder make a similar conjecture in general (covering the remaining case of $\kappa(X) = -\infty$).

\begin{Lconj}\cite[Conjecture 1.7]{HS21(1)} \label{conj:HS21}
Let $X$ be a smooth projective variety that admits a holomorphic $1$-form without zeros. Then $X$ is birational to a smooth projective variety $X'$ with a smooth morphism $X' \to A$ to a positive-dimensional abelian variety $A$.
\end{Lconj}

Our method is less suited to tackling this problem in general but we give partial results on the structure of such varieties in \S\ref{section:MRC}. The argument uses a slight generalization of the main theorem of \cite{PS14} that applies even in the absence of an Iitaka fibration instead supposing that certain subsheaves of $\Omega^k$ have some positivity (see Theorem~\ref{thm:generalization_of_PS14}).

Again under the assumptions of MMP, we recover a conjecture of Meng and Popa \cite[Conjecture C]{MP21} inspired by Ueno's conjecture K. 

\begin{Lthm}[= Theorem~\ref{thm:MP_conjecture_C}] \label{intro:thm:MP_conjecture_C}
Let $f : X \to A$ be an algebraic fiber space, with $X$ a smooth projective variety, $A$ an abelian variety, and general fiber $F$ with $\kappa(F) \ge 0$. Assume that $F$ admits a good minimal model. If $f$ is smooth away from a closed set of codimension at least $2$ in $A$, then there exists an isogeny $A' \to A$ such that
\[ X \times_A A' \birat F \times A' \]
i.e.\ $X$ becomes birational to a product after an \etale base change.
\end{Lthm}

Likewise, assuming abundance holds for $X$, we also verify a conjecture of Chen, Hao, and the author \cite[Conjecture B]{CCH23}.

\begin{Lthm}[= Theorem~\ref{thm:PLI_number}]\label{into:thm:PLI_number}
Let $X$ be a smooth projective good minimal model. If $g$ is the maximal number of pointwise linearly independent holomorphic $1$-forms on $X$ then there exists a smooth morphism $X \to A$ over an abelian variety of dimension $\dim{A} = g$. Moreover, $X \to A$ is isotrivial and trivialized by an isogeny.
\end{Lthm}

The main new ingredient is a decomposition result for a family of Calabi-Yau varieties equipped with a surjective map to an abelian scheme (see Theorem~\ref{thm:abelian_decomposition}). This arises from studying the decomposition theorem for $K_X$-trivial varieties in families. The proofs proceed by applying this decomposition result to the Iitaka fibration of a good minimal model.

\textbf{Conventions.} We work over the complex numbers. A \textit{variety} is an integral separated scheme of finite type over $\CC$. A \textit{minimal model} is a projective variety $X$ with terminal $\Q$-factorial singularities such that $K_{X}$ is nef. We say a minimal model is \textit{good} if $n K_X$ is base-point-free for some $n > 0$. A \textit{pair} $(X, \Delta)$ consist of a normal variety $X$ and a Weil $\Q$-divisor $\Delta = \sum a_i D_i$ on $X$ such that $K_X + \Delta$ is is $\Q$-Cartier. A \textit{flat family of pairs} $(X, \Delta) \to S$ is a flat morphism $X \to S$ and a Weil $\Q$-divisor $\Delta$ on $X$ whose components are flat over $S$ such that $(X, \Delta)_s \coloneq (X_s, \Delta_s)$ is a pair for each $s \in S$. 

\textbf{Acknowledgements.} I thank Ravi Vakil, Mihnea Popa, and Nathan Chen for many enlightening conversations and comments on this manuscript. The author would like to thank the Department of Mathematics at Harvard University for its hospitality during the 2024-2025 academic year. During the preparation of this article, the author was partially supported by an NSF Graduate Research Fellowship under grant DMS-2103099 DGE-2146755.

\section{Decomposition of the Abelian Variety Part} \label{section:decomposition}

In this section, we develop a technique for analyzing the Iitaka fibration of a variety equipped with a map $f : X \to A$ to an abelian variety. Imprecisely, we will show that if the fiber of the Iitaka fibration surjects onto $A$ then $X$ contains $A$ as a factor (really $X$ admits a rational isotrivial fibration in abelian varieties isogenous to $A$, see Theorem~\ref{thm:Iitaka_decomposition}). This decomposition is a relative version of the first stage of the (singular) Bogomolov-Beauville decomposition i.e.\ the isotriviality of the Albanese map. The essential use of this result is to provide an explicit birational model of $X$ admitting a smooth map to $A$.
\par 
In this section, we prove a direct generalization to the relative setting of this strong isotriviality (triviality after a finite \etale cover) of the Albanese map of a $K$-trivial variety with klt singulairites.

\begin{theorem} \label{thm:abelian_decomposition}
Let $g : (X, \Delta) \to S$ be a flat projective family of pairs over a locally noetherian reduced base scheme $S$ of pure characteristic zero whose fibers satisfy
\begin{enumerate}
\item $(X_s, \Delta_s)$ are klt pairs (in particular the fibers are integral with $K_{X_s} + \Delta_s$ a $\Q$-Cartier divisor) 
\item $K_{X_s} + \Delta_{X_s} \equiv_{\mathrm{num}} 0$ 
\end{enumerate}
equipped with a surjective $S$-morphism $g : X \to \cA$ where $\cA \to S$ is a polarized abelian scheme. Let $Z = f^{-1}(0_A)$ be the preimage of the zero section. Then there is an isogeny $\pi : \cB \to \cA$ equipped with a map $\rho : \cB \to \Aut^0_{(X,\Delta)/B}$ such that in the diagram
\begin{center}
\begin{tikzcd}
Z \times_S \cB \arrow[rrd, bend left] \arrow[ddr, bend right, "\sigma"] \arrow[rd, dashed, "\tilde{\sigma}"]
\\
& X \times_{\cA} \cB \arrow[r] \arrow[d] & \cB \arrow[d, "\pi"]
\\
& X \arrow[r, "f"] & \cA 
\end{tikzcd} 
\end{center}
the unique map $\tilde{\sigma} : Z \times \cB \iso X \times_{\cA} \cB$ induced by the action $\sigma : Z \times_S \cB \to X$ is an isomorphism. Hence there is an $S$-isomorphism $X \cong Z \times^G_S \cB$ where $G = \ker{(\cB \to \cA)}$.
\end{theorem}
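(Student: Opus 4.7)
The plan is to combine the absolute Beauville-Bogomolov decomposition for klt pairs with numerically trivial log-canonical class -- applied fiberwise over $S$ -- with a relative construction of the abelian scheme $\cB$ via the identity component of the relative automorphism functor. Fiberwise, each $(X_s, \Delta_s)$ is a projective klt pair with $K_{X_s} + \Delta_s \equiv 0$, so by the singular Beauville-Bogomolov decomposition for klt pairs with numerically trivial log-canonical class (Druel, Greb-Guenancia-Kebekus-Peternell, Cao-H\"oring, et al.), the Albanese morphism $\alpha_s : X_s \to \operatorname{Alb}(X_s)$ is a locally constant fibration trivialized by a finite \'etale isogeny. The surjection $f_s : X_s \to \cA_s$ factors through $\operatorname{Alb}(X_s)$; by Poincar\'e complete reducibility one obtains an abelian subvariety $B_s \subset \operatorname{Alb}(X_s)$ mapping isogenously to $\cA_s$ and acting on $X_s$, producing the desired isomorphism $Z_s \times B_s \iso X_s \times_{\cA_s} B_s$ fiber by fiber.

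To globalize this structure, consider $\mathcal{T} := \Aut^0_{(X,\Delta)/S}$. This is representable by a group scheme over $S$: the pair structure, together with the $f$-relatively ample line bundle obtained by pulling back the polarization of $\cA$, rigidifies the automorphism functor. The fiberwise analysis shows each $\mathcal{T}_s$ is an abelian variety of fixed dimension, whence $\mathcal{T}$ is smooth and proper over $S$ by the fibral criterion. After an \'etale base change of $S$ so that $Z \to S$ admits a section (permissible since the desired conclusion descends along \'etale covers), the action of $\mathcal{T}$ on this section composed with $f$ gives a homomorphism $\mathcal{T} \to \cA$ of abelian schemes. A relative version of Poincar\'e reducibility -- valid because $\cA$ is polarized -- then furnishes an abelian subscheme $\cB \subset \mathcal{T}$ with $\pi : \cB \to \cA$ an isogeny, and the inclusion $\cB \hookrightarrow \mathcal{T}$ is the required map $\rho$.

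With $\cB$ in hand, the action $Z \times_S \cB \to X$ lies over $\pi : \cB \to \cA$, so it factors through an $S$-morphism $\tilde{\sigma} : Z \times_S \cB \to X \times_\cA \cB$ between flat proper schemes of equal relative dimension. By the fiberwise case this is an isomorphism on every geometric fiber of $S$, hence an isomorphism by the fibral criterion for isomorphisms, and quotienting by the diagonal action of $G = \ker(\pi)$ yields $X \cong Z \times^G_S \cB$. The principal technical obstacle lies in the middle paragraph: showing that $\Aut^0_{(X,\Delta)/S}$ is a genuine abelian scheme, not merely a group scheme with abelian variety fibers, and that $\pi$ is a flat isogeny of constant degree. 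This combines deformation theory of klt pairs with $K + \Delta \equiv 0$ (to obtain smoothness and properness of $\mathcal{T} \to S$) with a careful use of the polarization on $\cA$ to carry out the relative Poincar\'e reducibility producing $\cB$.
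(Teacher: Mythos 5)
Your proposal follows essentially the same route as the paper: fiberwise reduction to the known isotriviality of the Albanese map for klt pairs with numerically trivial log canonical class, globalization through $\Aut^0_{(X,\Delta)/S}$ as an abelian scheme, a Poincar\'e-reducibility argument to extract the isogeny $\cB \to \cA$, and the fibral criterion for isomorphisms to upgrade the fiberwise statement. There are, however, two places where your shortcuts need repair. First, you assert that each $\Aut^0_{(X_s,\Delta_s)}$ is an abelian variety of ``fixed dimension'' and defer the abelian-scheme property to unspecified ``deformation theory of klt pairs''; the actual input is that klt singularities are Du Bois (Koll\'ar--Kov\'acs), so $R^1 g_* \struct{X}$ is locally free and commutes with base change, whence $\dim \Aut^0_{(X_s,\Delta_s)} = h^1(X_s,\struct{X_s})$ is locally constant --- this is precisely the hypothesis required by the SGA3 criterion that produces the open, proper, finitely presented $G^0$. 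Without identifying this input, the ``principal technical obstacle'' you name remains open. Second, the \'etale base change to obtain a section of $Z \to S$ is both avoidable and, as stated, incompletely justified: descending the abelian scheme $\cB$ and the isomorphism $\tilde\sigma$ back to $S$ requires descent data, which you get only because the construction is canonical --- a point you do not address. The homomorphism $\Aut^0_{(X,\Delta)/S} \to \cA$ in fact exists over $S$ itself, either by rigidity (the map $(t,x) \mapsto f(t\cdot x) - f(x)$ factors through the projection to $\Aut^0_{(X,\Delta)/S}$) or, as in the paper, via the functorial map $\Alb_{X/S} = \Pic^0_{\Pic^0_{X/S}/S} \to \cA^{\vee\vee} \cong \cA$, which needs no section. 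Finally, the polarization needed for the reducibility step must live on $\Aut^0_{(X,\Delta)/S}$, not on $\cA$ (the pullback of a polarization along $\Aut^0_{(X,\Delta)/S} \to \cA$ is not ample unless that map is finite); it is supplied by the projectivity of $\Aut^0_{(X,\Delta)/S}$ over $S$, which sits as a proper, locally closed subscheme of the relative Hilbert scheme.
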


\begin{remark}
 For $S$-schemes $X$ and $Y$ equipped with right and left $G$-actions respectively, the notation $X \times^G_S Y$ means the quotient of $X \times_S Y$ by the anti-diagonal $G$-action:
    \[ g \cdot (x,y) = (x \cdot g^{-1}, g \cdot y), \quad g \in G, \, (x,y) \in X \times Y \]
\end{remark}

The proof follows directly from the case when $S$ is a point which is essentially proved in \cite{Jinsong20} or \cite{Amb05}. The extra ingredient needed to globalize the argument is a canonical choice of isogeny $A \to \Alb_X$ trivializing the Albanese map. Then these isogenies can be glued together to an isogeny trivializing the entire family of Albanese maps\footnote{A subtlety here is that there may not be a relative Albanese map if $X \to S$ does not admit a section, indeed there is an $\Alb_{X/S}$-torsor whose nontriviality provides an obstruction to the existence of such a map.}. Hence, once the result is established over a point, we formulate the statement in families using the map $\Alb_{X/S} \to \cA$ induced by $X \to \cA$. To make this work, we need acess to global objects interpolating the Picard scheme, automorphism scheme, and Albanese of the fibers.

\subsection{The Albanese and Automorphism Schemes In Families}

Let $\Pic_{X/S} \to S$ be the fppf sheafification of the relative Picard functor \[ (T \to S) \mapsto \Pic{(X_T)} / \Pic{(T)} \]
In good situations, this is representable by a finite type group scheme and the connected components of the identity of the fibers glue to an open and closed subscheme $\Pic_{X/S}^0 \subset \Pic_{X/S}$. In such situations, we define $\Alb_{X/S} \coloneq \Pic_{\Pic_{X/S}^0/S}^0$. This satisfies a mapping property similar to that of the Albanese over a point. Given any $S$-map $f : X \to \cA$ to an abelian scheme over $S$ we can apply $\Pic_{-/S}^0$ twice to obtain a map
\[ \Alb_{X/S} \to \cA^{\vee \vee} \iso \cA \]
using the double-duality isomorphism $\cA^{\vee \vee} \iso \cA$ defined by the Poincar\'{e} bundle. This procedure produces a homomorphism
\[ \Hom_{S}(X,\cA) \to \Hom_{S}(\Alb_{X/S}, \cA) \] 
of abelian groups whose kernel is given by the maps of the form $X \to S \to \cA$ for some section $S \to \cA$. To see this final statement, note that $\Alb_{X/S} \to \cA$ factors through the zero section, $e : S \to \cA$, exactly if $\cA^{\vee} \to \Pic_{X/S}^0$ is trivial but the restriction of $\Pic^0_A$ to a subscheme is injective unless it is zero dimensionsional, so $X \to \cA$ factors through a section. Note, however, that the short exact sequence
\[ 0 \to \cA(S) \to \Hom_{S}(X, \cA) \to \Hom_{S}(\Alb_{X/S}, \cA) \to 0 \]
may not be split (at least it is not canonically so). This is a manifestation of the fact that the Albanese's universal property is correctly formulated on the category of \textit{pointed} schemes: $(X, x) \to (\Alb_X, e)$ is the initial \textit{pointed} map to an abelian variety (where abelian varieties are pointed at their origin). Indeed, when $X \to S$ does not admit a section, there may not be any ``albanese map'' $X \to \Alb_{X/S}$ over $S$. Rather, there is a $\Alb_{X/S}$-torsor of such maps which, if nontrivial, does not have a global section. Thus, if $X \to S$ admits a section $\sigma$ then the Picard functor can be rigidified along $\sigma$ giving a universal bundle on $\Pic_{X/S}^0 \times_S X$. The Albanese map $X \to \Alb_{X/S}$ is then defined by viewing the universal bundle $\cP$ on $X \times_S \Pic_{X/A}^0$ as a line bundle on $(\Pic_{X/S}^0)_X$ (the base change along $X \to S$) hence defining a map $X \to \Alb_{X/S} = \Pic_{\Pic_{X/S}^0}^0$.
To run our argument, we first must know that $\Pic_{X/S}^0$ and $\Alb_{X/S}$ are abelian schemes. We will use the following results of SGA3.

\begin{prop} \label{prop:group_scheme_properties}
Let $G \to S$ be a group scheme locally of finite presentation. Suppose that for all $s \in S$ the connected component of the identity $(G_s)^0 \subset G_s$ is smooth of locally constant dimension. Then the subfunctor $G^0 \subset G$ of sections $T \to G$ that factor through $(G_s)^0$ fiberwise is represented by an open subgroup scheme of finite presentation. Furthermore
\begin{enumerate}
\item if $S$ is reduced then $G^0 \to G$ is smooth
\item if each $(G_s)^0$ is proper and $G \to S$ is separated, then $G^0 \subset G$ is closed and $G^0 \to S$ is proper.
\end{enumerate}
\end{prop}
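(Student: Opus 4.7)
My plan follows the classical SGA3 argument (Expos\'e VI$_B$, \S3). The first and most delicate step is to exhibit a smooth open neighborhood $U \subseteq G$ of the identity section $e \colon S \to G$ with $U_s \subseteq (G_s)^0$ for every $s$. I would deduce smoothness of $\pi \colon G \to S$ along $e(S)$ from the fiberwise hypothesis: smoothness of $(G_s)^0$ at $e_s$ together with local constancy of $\dim (G_s)^0$ upgrades to flatness of $G \to S$ along $e(S)$ via the crit\`ere de platitude par fibres (EGA IV$_3$, 11.3.10), after which fiberwise smoothness plus flatness yields smoothness. Since $(G_s)^0$ is open in $G_s$ (being smooth of the full fiber dimension), shrinking the smooth locus and replacing $U$ by $U \cap U^{-1}$ produces a symmetric neighborhood with the required property.

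Next I would construct $G^0$ as the open subgroup scheme $\bigcup_{n \geq 1} m_n(U \times_S \cdots \times_S U)$, where $m_n$ denotes $n$-fold multiplication. Each image is open because $U \to S$ smooth forces each $m_n$ to be smooth (using that left translation by a section of a smooth locus is an $S$-automorphism of $G$), and smooth maps are open. The union is stable under multiplication and inversion by construction, hence an open $S$-subgroup scheme of $G$, locally of finite presentation as an open subscheme of $G$. Fiberwise, $G^0_s$ is the subgroup of $G_s$ generated by the nonempty open neighborhood $U_s$ of $e_s$; a standard topological argument for finite-type group schemes over a field identifies this generated subgroup with $(G_s)^0$, giving the expected functor of points.

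For (1), assuming $S$ reduced, I would show smoothness of $G^0 \to S$ by a translation argument. The smooth locus is open in $G^0$, contains $e(S)$, and is stable under the $G^0$-action on itself by left translation (which consists of $S$-automorphisms), and so must exhaust $G^0$. For (2), with each $(G_s)^0$ proper and $G \to S$ separated, $G^0 \to S$ has proper fibers, is separated, and is locally of finite presentation. Properness of $G^0 \to S$ then follows either from the fiberwise criterion for properness or directly from the valuative criterion: uniqueness from separatedness, and existence of limits from fiberwise properness of $(G_s)^0$ after a suitable spreading. Once $G^0 \to S$ is proper, the open immersion $G^0 \hookrightarrow G$ becomes a proper morphism into a separated $S$-scheme, hence a closed immersion, which yields the closedness of $G^0$ in $G$.

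The main obstacle is the fiberwise-to-global passage at both ends of the argument. Smoothness along $e(S)$ in the first stage cannot be read off from fiberwise smoothness alone, so the group structure must be leveraged via the fiberwise flatness criterion to globalize. Similarly in (2), deducing global properness of $G^0 \to S$ from fiberwise properness requires nontrivial EGA machinery or a careful valuative-criterion argument. Once these two technical points are in hand, the remainder of the proof is routine manipulation of open subgroup schemes.
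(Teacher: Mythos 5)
There is a genuine gap at the foundation of your construction. You claim that fiberwise smoothness of $(G_s)^0$ together with local constancy of $\dim (G_s)^0$ upgrades to flatness (hence smoothness) of $G \to S$ along the identity section via the crit\`ere de platitude par fibres. That criterion (EGA IV$_3$, 11.3.10) compares flatness of $X \to Y$ with flatness of $X \to S$ for a morphism of $S$-schemes; taking $Y = S$ it yields nothing, and the implication you want is simply false over a non-reduced base. For example, $G = \operatorname{Spec} k[\epsilon][x]/(\epsilon x) \subset \mathbb{G}_{a,\,k[\epsilon]}$ (with $\epsilon^2 = 0$) is a subgroup scheme whose unique fiber is $\mathbb{G}_a$, smooth and connected of constant dimension $1$, yet $G$ is not flat over $k[\epsilon]$ at the identity. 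Since the statement assumes $S$ reduced only for conclusion (1), your symmetric smooth neighborhood $U$ --- and with it the openness of the images $m_n(U \times_S \cdots \times_S U)$, which you derive from smoothness of $U \to S$ --- is not available in the generality in which representability is asserted. The paper avoids this entirely by quoting the implication (ii) $\Rightarrow$ (iv) of the cited SGA3 corollary for open representability, and quotes the same source for (1); your reconstruction of that input is where the proposal breaks.

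The second gap is that you never establish quasi-compactness of $G^0 \to S$: an open subscheme of $G$ is only \emph{locally} of finite presentation, whereas the statement asserts finite presentation, and this is precisely where the paper's proof does all of its work (a Noetherian induction on $S$, using surjectivity of the difference map $U \times_V U \to \pi^{-1}(\pi(U))$ on fibers, which holds because two nonempty opens of a connected group scheme over a field must meet after translation). This omission propagates into your part (2): properness requires $G^0 \to S$ to be of finite type, and ``fiberwise properness'' alone never implies properness (open immersions have proper fibers). The correct input, as in the paper's appeal to EGA IV$_3$, 15.7.10, is separatedness plus finite presentation plus proper \emph{geometrically connected} fibers; your sketch neither isolates the role of connectedness nor supplies the missing quasi-compactness. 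Your translation argument for (1) and the closing step ``proper into separated is closed'' are fine and agree with the paper.
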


\begin{proof}
Open representability is given by the implication (ii) $\implies$ (iv) of \cite[Tome I, Expos\'{e} IV$_B$, Corollary 4.4]{SGA3}. (1) is also proved in \textit{loc. cit.} so we only need to prove that $G^0 \to S$ is finitely presented. Since $G^0 \subset G$ is open, $G^0 \to S$ is locally of finite presentation so it suffices to prove quasi-compactness. To do this, we pass to the case where $S$ is affine and proceed by Noetherian induction. Assuming for all proper closed subsets $T \subset S$ that $G_T^0$ is quasi-compact we must prove $G^0$ is quasi-compact. Following the arugment in \cite[Proposition~9.5.20, Lemma~9.5.1]{Kleiman}, consider affine open neighborhoods
\begin{center}
\begin{tikzcd}
G \arrow[r, "\pi"] & S
\\
U \arrow[u, hook] \arrow[r] & V \arrow[u, hook]
\end{tikzcd}
\end{center}
Since $\pi$ is smooth, it is open so $U' \coloneq \pi^{-1}(\pi(U))$ is open. Let $\alpha : G^0 \times_S G^0 \to G^0$ be the difference map $(x,y) \mapsto xy^{-1}$. It restricts to a map $\alpha' : U \times_V U \to U'$. I claim $\alpha' : U \times U \to U'$ is surjective. We will show this on fibers. For $s \in \pi(U)$, the map $\alpha'_s : U_s \times U_s \to G^0_s$ is the difference map on the irreducible $\kappa(s)$-group $G^0_s$. For any $g \in G^0_s$, the opens $U_s$ and $g \cdot U_s$ are nonempty and hence intersect say at $h \in (g \cdot U_s) \cap U_s$. This means we can write $h = g u_1$ and $h = u_2$ for $u_1, u_2 \in U_s$, but this implies $g = \alpha'_s(u_1, u_2)$ so $\alpha'_s$ is surjective. Since $U \to V$ is a map of affine schemes, $U \times_V U$ is quasi-compact, so its image $U'$ under $\alpha'$ is also quasi-compact. 
Let $T \coloneq S \sm \pi(U)$ be the closed complement. Since $\pi(U)$ is nonempty, the induction hypothesis shows $G_T^0$ is quasi-compact. However, $G^0 = U' \cup G_T$ with $U'$ a quasi-compact open and $G_T$ a quasi-compact closed subscheme, so $G^0$ is quasi-compact. 
\par 
For (2), assume each $(G_s)^0$ is proper and $G \to S$ is separated. Since $G^0 \subset G$ is open, $G^0 \to S$ is also separated. We showed above that it is finitely presented. Since $G^0_s$ is a connected group scheme over $\kappa(s)$, it is geometrically connected. Since it is also proper by assumption, we can apply \cite[IV$_3$, Corollary 15.7.10]{EGA} to conclude that $G^0 \to S$ is proper. Finally, $G^0 \to S$ is proper, and $G \to S$ is separated, so $G^0 \embed G$ is a closed embedding.
\end{proof}

\begin{lemma} \label{lemma:pic_abelian_scheme}
Let $f : X \to S$ be a flat projective map with $S$ reduced, locally noetherian, and pure characteristic zero. Assume that for all $s \in S$
\begin{enumerate}
    \item the geometric fibers $X_{\bar{s}}$ are integral,
    \item $X_{\bar{s}}$ are normal, 
    \item $X_{\bar{s}}$ has only Du Bois singularities,
\end{enumerate}
then $\Pic_{X/S}^0 \to S$ is an abelian scheme.
\end{lemma}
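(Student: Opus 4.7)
The plan is to realize $\Pic^0_{X/S}$ as the relative identity component of the full relative Picard scheme $\Pic_{X/S}$ via Proposition~\ref{prop:group_scheme_properties}, and then verify that the resulting open and closed subgroup scheme is smooth, proper, with connected fibers, hence an abelian scheme. The three fiberwise hypotheses to check are smoothness, locally constant dimension, and properness of $\Pic^0_{X_s}$.

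First, I would invoke Grothendieck's representability theorem (FGA): since $f : X \to S$ is flat projective with geometrically integral fibers (in particular $f_*\OO_X = \OO_S$ holds universally by cohomology and base change), $\Pic_{X/S}$ is represented by a separated group scheme locally of finite presentation over $S$. Being locally of finite presentation, Proposition~\ref{prop:group_scheme_properties} applies once we check its fiberwise hypotheses on the connected components $(\Pic_{X_s/\kappa(s)})^0 = \Pic^0_{X_s}$.

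Next, I would check the fiberwise conditions. Smoothness of $\Pic^0_{X_s}$ is automatic in characteristic zero by Cartier's theorem that group schemes locally of finite type over a field of characteristic zero are smooth. For locally constant dimension, note that $\dim \Pic^0_{X_s} = h^1(X_s, \OO_{X_s})$, and for a flat projective family whose fibers have Du Bois singularities the coherent Hodge number $h^1(X_s, \OO_{X_s})$ is deformation invariant; equivalently, $R^1 f_* \OO_X$ is locally free and compatible with arbitrary base change. This is the main cohomological consequence of the Du Bois hypothesis (degeneration of the Hodge--Du Bois spectral sequence and base change for $\underline{\Omega}^0$), and is well known through the work of Du Bois, Kollár--Kovács, and Kovács--Schwede. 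Properness of $\Pic^0_{X_s}$ is the key input from the Du Bois hypothesis: for a proper normal Du Bois variety over $\CC$, the natural map $H^1(X_s, \OO_{X_s}) \hookrightarrow H^1(X_s, \CC)$ is injective (the Hodge-theoretic characterization of Du Bois), so the image of $H^1(X_s, \ZZ) \to H^1(X_s, \OO_{X_s})$ is a full lattice. Consequently $\Pic^0_{X_s} = H^1(X_s, \OO_{X_s})/H^1(X_s, \ZZ)$ is a compact complex torus, and being algebraic, it is an abelian variety.

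Finally, I would apply Proposition~\ref{prop:group_scheme_properties} to $G = \Pic_{X/S}$. Since $S$ is reduced, part (1) yields that $\Pic^0_{X/S} \to S$ is smooth, and since each $\Pic^0_{X_s}$ is proper and $\Pic_{X/S} \to S$ is separated, part (2) yields that $\Pic^0_{X/S} \to S$ is proper. The geometric fibers are smooth, proper, connected, commutative group schemes, i.e.\ abelian varieties, so $\Pic^0_{X/S}$ is an abelian scheme. The main obstacle, conceptually, is isolating the precise input of the Du Bois condition: both the deformation invariance of $h^1(\OO)$ and the properness of $\Pic^0$ on each fiber reduce to the Hodge-theoretic fact that $\underline{\Omega}_{X_s}^0 \to \OO_{X_s}$ is a quasi-isomorphism in the relevant range, and one should be careful to spread this fiberwise Hodge theory over the reduced base $S$ using cohomology and base change.
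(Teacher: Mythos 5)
Your overall strategy coincides with the paper's: represent $\Pic_{X/S}$ by a separated, locally finitely presented group scheme, then feed the fiberwise inputs --- smoothness (Cartier, characteristic $0$), properness, and local constancy of $h^1(\struct{X_{\bar s}})$ coming from the Du Bois hypothesis --- into Proposition~\ref{prop:group_scheme_properties}. The representability, smoothness, and local-constancy steps match the paper essentially verbatim. The one place you genuinely diverge is the properness of $\Pic^0_{X_{\bar s}}$, and there your argument as written has a gap. The Hodge-theoretic characterization of Du Bois gives \emph{surjectivity} of $H^1(X_{\bar s},\CC)\to H^1(X_{\bar s},\struct{X_{\bar s}})$; there is no natural map in the direction you wrote, and surjectivity alone does not make the image of $H^1(X_{\bar s},\Z)$ a full lattice. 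The nodal cubic is Du Bois, the map above is surjective, and yet $\Pic^0\cong\Gm$: in general the Du Bois condition only forces $\Pic^0$ of a proper variety to be semi-abelian. To get compactness you must actually use normality, which guarantees that $H^1(X_{\bar s},\Q)$ carries a \emph{pure} weight-one Hodge structure (it injects into $H^1$ of a resolution); purity together with Du Bois surjectivity identifies $H^1(\struct{X_{\bar s}})$ with $H^{0,1}$ and makes the period lattice full. The paper sidesteps Hodge theory here entirely and proves properness from normality alone: pulling back along a resolution $\pi:\wt{Y}\to Y$ and using $\pi_*\struct{\wt{Y}}=\struct{Y}$ shows $\pi^*:\Pic^0_Y\to\Pic^0_{\wt{Y}}$ is injective, so $\Pic^0_Y$ sits inside an abelian variety and is proper. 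Either route works, but your Hodge-theoretic version needs the purity input from normality made explicit; as stated, the deduction from Du Bois alone is false.
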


\begin{proof}
First, since $f : X \to S$ is flat proper with geometrically integral fibers, $\Pic_{X/S}$ is representable by a separated locally finite type group scheme over $S$ \cite[Theorem 9.4.8]{Kleiman}. Applying Proposition~\ref{prop:group_scheme_properties}, it suffices to show that each $\Pic_{X_{\bar{s}}}^0$ is smooth and proper and its dimension in locally constant on $S$. This will show that $\Pic_{X/S}^0 \to S$ exists and is a smooth proper $S$-group whose fibers are abelian varieties.
\par
Assumption (2) implies that each $\Pic_{X_{\bar{s}}}^0$ is an abelian scheme. Indeed, if $Y$ is a normal proper variety, its albanese is always an abelian variety. To see this, let $\pi : \wt{Y} \to Y$ be a resolution of singularities. Then there is a map of group schemes $\pi^* \Pic_Y^0 \to \Pic_{\wt{Y}}^0$ and we know $\Pic_{\wt{Y}}^0$ is an abelian variety since the valuative criterion for properness follows from the extensibility for line bundle from opens of regular schemes. Furthermore, since $Y$ is normal $\pi_* \struct{\wt{Y}} = \struct{Y}$, if $\L$ is a line bundle on $Y$, then $f_* f^* \L = \L \ot f_* \struct{\wt{Y}} = \L$ by the projection formula. Hence $\pi^* \Pic_Y^0 \to \Pic_{\wt{Y}}^0$ is injective, so we conclude that $\Pic_Y^0$ must also be an abelian variety. 
\par 
The local constancy of dimension will follow from assumption (3). Indeed, because the fibers have only Du Bois singularities, $R^i f_* \struct{X}$ is a vector bundle whose formation commutes with base change \cite[Lemme 1]{DJ74}. Hence $\dim{\Pic_{X_{\bar{s}}}^0} = h^1(X_{\bar{s}}, \struct{X_{\bar{s}}})$ is locally constant.
\end{proof}

Now we let $\Aut_{(X,\Delta)/S} \to S$ be the group scheme representing
\[ (T \to S) \mapsto \Aut(X_T, \Delta_T) = \{ f : X_T \iso X_T \mid f(\Delta_T) \subset \Delta_T \} \]
Sending an automorphism to its graph, we see that $\Aut_{(X,\Delta)/S} \embed \Hilb_{X \times_S X/S}$ is a locally closed subfunctor so $\Aut_{(X,\Delta)/S}$ is representable by a separated $S$-group scheme locally of finite type. Therefore, applying Proposition~\ref{prop:group_scheme_properties} immediately gives the following.

\begin{lemma} \label{lemma:aut_abelian_scheme}
Let $f : (X, \Delta) \to S$ be a flat projective family of pairs with $S$ reduced, locally noetherian, and pure characteristic zero. Assume that for all $s \in S$
\begin{enumerate}
    \item the geometric fibers $X_{\bar{s}}$ are integral,
    \item $\Aut_{(X,\Delta)_{\bar{s}}}^0$ is an abelian scheme of locally constant dimension,
\end{enumerate}
then $\Aut_{(X,\Delta)/S}^0 \to S$ is an abelian scheme.
\end{lemma}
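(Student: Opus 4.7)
The plan is to derive this directly from Proposition~\ref{prop:group_scheme_properties} applied to the group scheme $G = \Aut_{(X,\Delta)/S}$, whose existence as a separated $S$-group scheme locally of finite type was just established by embedding into the Hilbert scheme $\Hilb_{X \times_S X/S}$ via the graph construction. The strategy is to verify the hypotheses of Proposition~\ref{prop:group_scheme_properties} and then extract smoothness, properness, and geometric connectedness of fibers in turn, which together define an abelian scheme.

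First I would unpack what assumption (2) buys: by hypothesis, each fiber $(G_s)^0 = \Aut^0_{(X,\Delta)_{\bar s}}$ is an abelian variety (of locally constant dimension over $S$). In particular, each $(G_s)^0$ is smooth and proper, which is precisely the input needed for Proposition~\ref{prop:group_scheme_properties}. Consequently, the open subfunctor $G^0 \subset G$ is represented by an open $S$-subgroup scheme of finite presentation. Since $S$ is reduced, part~(1) of that proposition yields smoothness of $G^0 \to S$. Since $G \to S$ is separated (the automorphism scheme always is) and the fibers $(G_s)^0$ are proper, part~(2) yields that $G^0 \to S$ is proper and that $G^0 \hookrightarrow G$ is a closed immersion.

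At this point $G^0 \to S$ is a smooth proper $S$-group scheme. To conclude it is an abelian scheme I would check that its geometric fibers are (geometrically) connected abelian varieties: by construction the fiber $(G^0)_{\bar s}$ is the identity component of $G_{\bar s}$, hence agrees with $\Aut^0_{(X,\Delta)_{\bar s}}$, which is geometrically connected and is an abelian variety by hypothesis~(2). Smooth, proper, with geometrically connected fibers that are abelian varieties is exactly the definition of an abelian scheme, completing the proof.

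There is no genuinely hard step here: all of the real content has already been packaged into Proposition~\ref{prop:group_scheme_properties}, and the only additional input, separation of the automorphism functor, is automatic from its realization as a locally closed subfunctor of the Hilbert scheme. The one point requiring a little care is the verification that $(G^0)_{\bar s} = \Aut^0_{(X,\Delta)_{\bar s}}$, but this is immediate from the definition of $G^0$ in Proposition~\ref{prop:group_scheme_properties} as the subfunctor of sections landing in the fiberwise identity components.
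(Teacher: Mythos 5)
Your proposal is correct and follows exactly the route the paper takes: the paper establishes representability of $\Aut_{(X,\Delta)/S}$ as a separated group scheme locally of finite type via the graph embedding into $\Hilb_{X\times_S X/S}$ and then states that the lemma follows immediately from Proposition~\ref{prop:group_scheme_properties}. You have simply spelled out the verification of the hypotheses and the extraction of smoothness, properness, and connectedness of fibers, which the paper leaves implicit.
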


Now we prove main result of this section. 

\begin{proof}[Proof of Theorem~\ref{thm:abelian_decomposition}]
We first prove the result when $S$ is a point. By the proof of \cite[Theorem 4.5]{Jinsong20}, $X \to \Alb_X$ is a homogeneous fibration for the $\Aut_{(X,\Delta)}^0$-action, meaning the map 
\[ \tilde{\sigma}_0 : F \times \Aut_{(X,\Delta)}^0 \to X \times_{\Alb_X} \Aut_{(X,\Delta)}^0 \]
induced by the $\Aut_{(X,\Delta)}^0$-action is an isomorphism where $F = \alb^{-1}(0)$. Therefore, if $X \to A$ is a surjective map to an abelian variety, we get a diagram
\begin{center}
    \begin{tikzcd}
        X \times_{A} \Aut_{(X,\Delta)}^0 \arrow[r] \arrow[d] & B \pullback \arrow[r] \arrow[d] & \Aut_{(X,\Delta)}^0 \arrow[d]
        \\
        X \arrow[r] \arrow[rr, bend right, "f"'] & \Alb_X \arrow[r] & A
    \end{tikzcd}
\end{center}
where $B$ is an abelian variety defined by the fiber product. Let $C = \ker{(\Alb_X \to A)}$ and $\wt{C} = r^{-1}(C)$. The translation action $\Aut_{(X,\Delta)}^0 \acts \Alb_X$ induces a map $r : \Aut_{(X,\Delta)}^0 \to \Alb_X$, giving a section of $\pi : B \to \Aut_{(X,\Delta)}^0$, and thus a decomposition $B \cong \Aut_{(X,\Delta)}^0 \times C$. Consider the diagram
\begin{center}
\begin{tikzcd}
    X \times_{\Alb_X} (\Aut_{(X,\Delta)}^0 \times C) \arrow[r] & X \times_{\Alb_X} B \arrow[r, equals] & X \times_{A} \Aut_{(X,\Delta)}^0
    \\
    F \times \Aut_{(X,\Delta)}^0 \times \wt{C} \arrow[u] \arrow[rr] & & Z \times \Aut_{(X,\Delta)}^0 \arrow[u, "\sigma"]
\end{tikzcd}
\end{center}
where the map $\sigma : Z \times \Aut_{(X,\Delta)}^0 \to X \times_{A} \Aut_{(X,\Delta)}^0$ is $(z, a) \mapsto (a \cdot z, a)$, which is clearly injective. Furthermore, $F \times \Aut_{(X,\Delta)}^0 \times \wt{C} \to X \times_{\Alb_X} (\Aut_{(X,\Delta)}^0 \times C)$ is given by $(f, a, c) \mapsto (a \cdot f, a + c, -r(c))$, and $F \times \Aut_{(X,\Delta)}^0 \times \wt{C} \to Z \times \Aut_{(X,\Delta)}^0$ is $(f, a, c) \mapsto (c \cdot f, a)$. Furthermore, since $\tilde{\sigma}_0$ is an isomorphism, $F \times \Aut_{(X,\Delta)}^0 \times \wt{C} \to X \times_{\Alb_X} (\Aut_{(X,\Delta)}^0 \times C)$ is surjective. Hence $Z \times \Aut_{(X,\Delta)}^0 \to X \times_A \Aut_{(X,\Delta)}^0$ is also surjective proving it is an isomorphism.
\par 
Now we prove the result over an arbitrary base $S$. By the main result of \cite{KK10}, the fibers $X_{\bar{s}}$ have Du Bois singularities so we are in the situation of the lemmas.
Furthermore, the proof of \cite[Theorem 4.5]{Jinsong20} shows that the automorphism schemes $\Aut_{(X,\Delta)_{\bar{s}}}^0$ are all abelian varieties of dimension $q = h^1(X_s, \struct{X_s})$, which is locally constant because of Du Bois singularities by \cite[Corollary 1.2]{KK10}. Hence applying Lemma~\ref{lemma:pic_abelian_scheme} and Lemma~\ref{lemma:aut_abelian_scheme} shows that $\Alb_{X/S}$ and $\Aut_{(X,\Delta)/S}^0$ are abelian schemes over $S$. Furthermore, there is an isogeny $\Aut_{(X,\Delta)/S}^0 \onto \Alb_{X/S}$, given by the action of $\Aut_{(X,\Delta)/S}^0 \acts \Alb_{X/S}$ by translations induced by the canonical action on $X$ and functoriality of $\Alb_{-/S}$. Consider the analogous diagram relative over $S$
\begin{center}
    \begin{tikzcd}
        Z \times_S \Aut_{(X,\Delta)/S}^0 \arrow[rrd, bend left] \arrow[ddr, bend right] \arrow[rd, dashed]
        \\
        & X \times_{\cA} \Aut_{(X,\Delta)/S}^0 \arrow[d] \arrow[r] & \Aut_{(X,\Delta)/S}^0 \arrow[d]
        \\
        & X \arrow[r] & \cA
    \end{tikzcd}
\end{center}
The case over a point proves that on each fiber over $s \in S$ the dotted map becomes an isomorphism. Since $Z \times_S \Aut_{(X,\Delta)/S}^0 \to S$ and $X \times_{\cA} \Aut_{(X,\Delta)/S}^0 \to S$ are flat and proper (since they are fiber products of flat and proper maps), the dashed arrow is an isomorphism by \cite[III$_1$, Proposition 4.6.7(i)]{EGA}. Since $X \to S$ is projective, the relative Hilbert scheme, and hence $\Aut_{(X,\Delta)/S}^0$ are projective over $S$. Therefore $\Aut_{(X,\Delta)/S}^0 \to S$ is a polarizable abelian scheme. Using suitable polarizations to form complements, there is a map $\cA \to \Aut_{(X,\Delta)/S}^0$ such that the composition
\[ \cA \to \Aut_{(X,\Delta)/S}^0 \to \cA \]
is an isogeny $\varphi$. Then the composition gives us a similar diagram
\begin{center}
    \begin{tikzcd}
        Z \times_S \cA \arrow[rrd, bend left] \arrow[ddr, bend right] \arrow[rd, dashed]
        \\
        & X \times_{\varphi} \cA \arrow[d] \arrow[r] & \cA \arrow[d, "\varphi"]
        \\
        & X \arrow[r] & \cA
    \end{tikzcd}
\end{center}
and the dashed map $Z \times_S \cA \to X \times_{\varphi} \cA$ is the pullback of the dashed map $Z \times_S \Aut_{(X,\Delta)/S}^0 \to X \times_{\cA} \Aut_{(X,\Delta)/S}^0$ along $\id_X \times (\cA \to \Aut_{(X,\Delta)/S}^0)$, and hence is also an isomorphism, so we win. 
\end{proof}

\begin{remark}
It is important that the proof only uses the Albanese map in the special case $S = *$. We remarked earlier that an Albanese map may not exist over $S$ except after a finite \etale cover $S' \to S$ to trivialize a $\Alb_{X/S}$-torsor. Passing to such a cover first would not be sufficient for our applications to the flat locus of the Iitaka fibration $X^{\min} \to S$. Indeed, we are allowed to shrink only to a Zariski open if we hope to obtain an actual birational model of $X$. Understanding some \etale open would not suffice.
\end{remark}

\section{Application to the Iitaka Fibration} \label{section:iitaka}

It will be of particular interest to apply the decomposition results of the previous section to the Iitaka fibration $X^{\min} \to S$ of a good minimal model of $X$ (assuming one exists). Indeed, whenever $\psi : X \to S$ is such that $K_X = \psi^* L$, an application of Theorem~\ref{thm:abelian_decomposition} immediately gives the following generalization of \cite[Theorem 4.1]{CCH23}:

\begin{thm} \label{thm:Iitaka_decomposition}
Let $\psi : X \to S$ be a proper morphism of normal varieties with $X$ having klt $\Q$-gorenstein singularities and $K_X = \psi^* L$. Let $f : X \to A$ be a map to an abelian variety such that the image of the general fiber of $\psi$ in $A$ has dimension $d$. Then there exists a quotient with connected kernel $q : A \to B$ to an abelian variety $B$ of dimension $d$ and a birational map $X \birat Z \times^G B'$ making the diagram
\begin{center}
    \begin{tikzcd}
    X \arrow[d, dashed] \arrow[r, "f"] & A \arrow[d, "q"] 
    \\
    Z \times^G B' \arrow[r] & B 
    \end{tikzcd}
\end{center}
commute. Here, $B' \to B$ is an isogeny with kernel $G$ and $Z$ is a smooth projective variety with a $G$-action.
\end{thm}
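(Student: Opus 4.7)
The plan is to apply Theorem~\ref{thm:abelian_decomposition} over the smooth locus $S^\circ \subset S$ of $\psi$, after first producing a quotient $q : A \onto B$ of dimension $d$ so that a general fiber of $\psi$ surjects onto $B$ (the surjectivity hypothesis required by that theorem). The main preliminary step is to identify $B$ via the Albanese of a general fiber, and the main technical simplification afterwards is a rigidity argument that ensures the isogeny produced by Theorem~\ref{thm:abelian_decomposition} is a constant family rather than only \'{e}tale-locally trivial.

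First, I would identify $B$ using the Albanese of a general fiber. For $s \in S^\circ$ the fiber $F = \psi^{-1}(s)$ is smooth projective with $K_F = \psi^* L|_F \equiv 0$, since $L$ restricts to zero on the point $\{s\}$. By the classical theorem of Kawamata that the Albanese map of a smooth projective variety with trivial canonical class is surjective, $F \onto \Alb_F$ is surjective. After translating $A$ so that $f(x_0) = 0$ for some base point $x_0 \in F$, the restriction $f|_F$ factors through $\Alb_F$, and its image $A_F \subset A$ is an abelian subvariety of dimension exactly $d$. By Poincar\'{e} complete reducibility for the polarized complex abelian variety $A$, choose a connected complementary abelian subvariety $A_1 \subset A$ with $A_F + A_1 = A$ and $A_F \cap A_1$ finite, and set $q : A \onto B \coloneq A/A_1$. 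Then $\ker q = A_1$ is connected, $\dim B = \dim A_F = d$, and the composite $F \to A \to B$ factors through the isogeny $A_F \to B$ and is therefore surjective.

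Next, I would apply Theorem~\ref{thm:abelian_decomposition} to the restricted family $X^\circ \coloneq \psi^{-1}(S^\circ) \to S^\circ$ equipped with the $S^\circ$-morphism $X^\circ \to \cA \coloneq B \times S^\circ$ induced by $(q \circ f, \psi)$. The hypotheses are met: $X^\circ \to S^\circ$ is a flat projective family of smooth (hence klt and integral) varieties with $K_{X_s} \equiv 0$; $\cA = B \times S^\circ$ is a polarized abelian scheme; and $X^\circ \to \cA$ is proper (as $\cA \to S^\circ$ is separated and $\psi$ is proper) and surjective on the general fiber by Step~1, hence surjective. The theorem produces an isogeny $\pi : \cB \to \cA$ and an $S^\circ$-isomorphism $X^\circ \cong Z^\circ \times^G_{S^\circ} \cB$ with $G = \ker \pi$ and $Z^\circ = (q \circ f)^{-1}(0_B) \subset X^\circ$. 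Its proof constructs $\cB = \cA$ with $\pi$ the composition $\cA \to \Aut^0_{X^\circ/S^\circ} \to \cA$. Because $\cA = B \times S^\circ$ is a trivial abelian scheme over the irreducible base $S^\circ$ and $\mathrm{End}(B)$ is discrete, rigidity of morphisms of abelian varieties forces $\pi = \varphi_0 \times \id_{S^\circ}$ for some isogeny $\varphi_0 : B \to B$. Setting $B' \coloneq B$ (as source of $\varphi_0$) and $G \coloneq \ker \varphi_0 \subset B'$, we identify $\cB \cong B' \times S^\circ$ with $G$ acting by translation on the $B'$ factor.

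Finally, since $\cB$ is trivial the relative quotient collapses to an ordinary product,
\[ Z^\circ \times^G_{S^\circ} \cB \;=\; (Z^\circ \times B')/G \;=\; Z^\circ \times^G B', \]
so $X^\circ \cong Z^\circ \times^G B'$. The smooth projective variety $Z$ required by the statement is then produced by equivariant resolution of singularities (applicable since $G$ is finite in characteristic zero) applied to a $G$-equivariant projective compactification of $Z^\circ$, yielding $X \birat X^\circ \cong Z^\circ \times^G B' \birat Z \times^G B'$. The diagram commutes because the composite $X^\circ \cong Z^\circ \times^G B' \to B'/G = B$ coincides with $q \circ f|_{X^\circ}$ by construction. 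The main technical obstacle is the rigidity step for $\pi$: without it, the conclusion of Theorem~\ref{thm:abelian_decomposition} would only provide a trivialization of $\cB$ over an \'{e}tale cover of $S^\circ$, and one could not extract an honest birational equivalence of $X$ itself but only of an \'{e}tale cover.
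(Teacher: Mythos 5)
Your proof follows essentially the same route as the paper: identify $B$ from the (translated) image $A_F \subset A$ of a general fiber via its Albanese, apply Theorem~\ref{thm:abelian_decomposition} to the restricted family over an open subset of $S$ mapping to the constant abelian scheme $B\times S^\circ$, observe that the resulting isogeny $\cB \to \cA$ is a constant isogeny $B'\times S^\circ \to B\times S^\circ$ (you argue this by rigidity of endomorphisms of a constant abelian scheme over a connected reduced base, where the paper instead cites \cite[Lemma~3.2]{CCH23} --- both are fine), and finish with a $G$-equivariant compactification and resolution. The one genuine misstep is your choice of open locus: you restrict to the smooth locus of $\psi$ and assert that the general fiber $F$ is smooth, invoking Kawamata's surjectivity of the Albanese for smooth $K$-trivial varieties. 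But the theorem only assumes $X$ has klt $\Q$-Gorenstein singularities, so the singular locus of $X$ may dominate $S$ and the relative smooth locus over $S$ may be empty; in general no fiber of $\psi$ is smooth. The correct restriction, as in the paper, is to the open locus where $\psi$ is flat and the fibers are klt (open by \cite[Lemma~5.17]{KM98}), which is exactly the hypothesis Theorem~\ref{thm:abelian_decomposition} is designed for; correspondingly, the surjectivity of $F \to \Alb_F$ must be taken from the klt $K$-trivial case (as in \cite{Jinsong20} or \cite{Amb05}, noting klt implies rational singularities so $\Alb_F$ is well defined), not from the smooth case. With that substitution the rest of your argument goes through unchanged.
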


\begin{proof}
Let $q : A \to B$ be the quotient as in Lemma~\ref{lemma:dimesnion_and_PLI_forms} determined by the image of $F \to \Alb_F \to A$.
Let $U \subset S$ be the locus where $\psi$ is flat fibers have at worst klt singularities (which is open by \cite[Lemma~5.17]{KM98}). Then we can apply Theorem~\ref{thm:abelian_decomposition} to the diagram,
\begin{center}
    \begin{tikzcd}
        X_U \arrow[rr, "q \circ f"] \arrow[rd, "\psi"] & & B \times U \arrow[ld]
        \\
        & U
    \end{tikzcd}
\end{center}
so we get an isogeny $B' \to B$ (the Theorem gives an abelian scheme with an isogeny to $B \times U$ but this is a constant abelian scheme by \cite[Lemma 3.2]{CCH23} -- along with an isomorphism $X_U \iso B' \times^G Z$ where $G \coloneq \ker{(B' \to B)}$ making the requisite diagrams commute. Now following the argument of \cite[Theorem 4.1]{CCH23}, we choose a smooth equivariant compactification $Z \embed \ol{Z}$, meaning $\ol{Z}$ is smooth and projective with a $G$-action, and $Z \embed \ol{Z}$ is an $G$-equivariant open embedding. This relies on an equivariant version of Nagata compactification, which holds only for finite group actions. Given a finite group action $G \acts X$, consider the scheme-theoretic image of $X \to \prod_{g \in G} \overline{X}$ given by applying $g$. Then one can embed $X \embed \overline{X}$ for any given Nagata compactification \cite{Nagata63} and resolve equivariantly using the existence of a functorial resolution of singularities \cite[Prop.~3.9.1]{Kollar09}. From the equivariant compactification above, we have birational maps
\[ X \birat X_U \iso B' \times^G Z \embed B' \times^G \ol{Z} \]
compatible with the projections to $A$ and $B$. Furthermore, $B' \times^G \ol{Z}$ is a smooth projective variety since $Z$ is smooth projective and $G \acts B'$ freely.
\end{proof}

\begin{remark}
Since $X$ was not required to be complete we can apply the result not only to the Iitaka fibration of a good minimal model $X^{\min} \to S$ but to any model of the Iitaka fibration $\psi : X' \rat S'$ which is \textit{almost holomorphic}, meaning the general fiber is contained in the defined locus of $\psi$ (i.e. it restricts to a \textit{proper} map $X'_U \to U$ over some open) and $K_X = \psi^* L + E$ with $\Supp{E}$ not dominating $S$. In all our subsequent results, ``admitting a good minimal model'' can be replaced by the \textit{a priori} weaker condition of having a model as above. It is not clear to the author if the existence of such a model is easier than the existence of good minimal models. 
\end{remark}

For ease of exposition, we recall the main notation and construction of \cite[\S2]{CCH23}. Let $X$ be a smooth projective variety with Kodaira dimension $\kappa(X) \geq 0$ equipped with a map $f : X \to A$ to an abelian variety.

\begin{defn}
Let $X$ be a smooth projective variety and $f : X \to A$ a morphism to an abelian variety. We say that $f : X \to A$ satisfies $(\ast)_g$ for some integer $g \ge 1$ if there is a subspace $W \subset H^0(A, \Omega_A)$ with $\dim{W} = g$ of $1$-forms such that $Z(f^* \omega) = \varnothing$ for any nonzero $\omega \in W$.
\end{defn}

\begin{rmk}
This is exactly the pointwise linearly independent (PLI) condition considered in \cite[\S2]{CCH23}: namely the existence of a sequence of holomorphic $1$-forms $\omega_1, \dots, \omega_g \in H^0(A, \Omega_A)$ such that $f^* \omega_1, \dots, f^* \omega_g$ are pointwise linearly independent. This is also equivalent to the existence of a vector sub-bundle $\cO_X^{\oplus g} \subset\Omega_X^{1}$ factoring through $f^* \Omega_A \to \Omega_X$.
\end{rmk}

Our goal is to produce a candidate surjection from $X$ onto an abelian variety of dimension $\ge g$. Let $\phi : X \rat X^{\can}$ be the Iitaka fibration of $X$, and consider a proper birational model $X \to S$ resolving the indeterminacies of $X \rat X^{\can}$ with $S$ normal. Assume that $X$ has only $\Q$-Gorenstein rational singularities\footnote{We will apply this construction in two cases: (1) to a good minimal model $X^{\min}$ of $X$ which has $\Q$-factoral terminal singularities and $X^{\min} \to S$ is the morphism induced by the semiample divisor $K_{X^{\min}}$. Or $S$ is a desingularization of $X^{\can}$ and $X' \to S$ is a smooth resolution of the rational map $X \rat S$.}. The morphism $f : X \to A$ induces a rational map $X' \to A$, which is everywhere defined since $A$ is an abelian variety and $X'$ has at worst rational singularities. These fit into the diagram below:
\begin{center}
\begin{tikzcd}[row sep=small]
& F \arrow[rr] \arrow[d] & & \Alb_{F} \arrow[d]
\\
X \arrow[dd, dashed] & X' \arrow[l] \arrow[rr] \arrow[dd] & & A \arrow[dd] \arrow[dl]
\\
&  & Q_X \arrow[rd] &
\\
X^{\can} & S \arrow[l] \arrow[ru, dashed] \arrow[rr] & & \Alb_S
\end{tikzcd}
\end{center}

\noindent In the diagram, $F$ denotes a general fiber of $X' \rightarrow S$ and $Q_X$ is the cokernel of $\Alb_F \to A$. Note the general fiber also has rational singularities, so there is no ambiguity about the meaning of $\Alb_F$. This gives the third column of induced morphisms on Albanese varieties. Since the fiber $F$ is contracted in $X' \to Q_X$ by definition, the map $X' \to Q_X$ factors birationally through $X' \to S$ by rigidity. If $S$ has rational singularities, $S \rat Q_X$ extends to a morphism but we will not need this.

\begin{lemma}[{c.f. \cite[Proof of Conjecture 1.2]{PS14}}] \label{lemma:dimesnion_and_PLI_forms}
Following the notation above, the inequalities hold:
\begin{equation}\label{eq:1}
    \dim{F} \ge \dim{A} - \dim{Q_X} \ge \dim{W}.
\end{equation}
and there is a quotient $A \onto B$ with connected kernel to an abelian variety of dimension $\dim{B} \ge \dim {W}$ such that $F \to B$ is surjective.
\end{lemma}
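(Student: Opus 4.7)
The plan is to establish the two inequalities separately---using the Popa--Schnell theorem \cite{PS14} for the upper bound on $\dim W$, and the Kawamata--Ueno surjectivity of the Albanese map for Kodaira-dimension-zero varieties for the lower bound on $\dim F$---and then construct $B$ via Poincar\'e reducibility applied to $A$. Throughout, write $A' \subset A$ for the image of the natural map $\Alb_F \to A$, so that $\dim A - \dim Q_X = \dim A'$ by the very definition of $Q_X$ as the cokernel.

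For the second inequality $\dim A - \dim Q_X \ge \dim W$, I would show $W \cap H^0(Q_X, \Omega_{Q_X}) = 0$ as subspaces of $H^0(A, \Omega_A)$, where $H^0(Q_X, \Omega_{Q_X})$ is identified with its image under pullback along $A \onto Q_X$. This disjointness means that $W$ injects into $H^0(A',\Omega_{A'}) \cong H^0(A,\Omega_A)/H^0(Q_X,\Omega_{Q_X})$, yielding $\dim W \le \dim A'$. To prove the disjointness, I would argue by contradiction: if $0 \neq \omega \in W \cap H^0(Q_X, \Omega_{Q_X})$, write $\omega = h^*\omega'$ for the quotient $h : A \onto Q_X$. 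Then $h \circ f : X' \to Q_X$ is constant on the general fiber $F$ (since $\Alb_F \to Q_X$ vanishes by construction), hence on every fiber of $\psi : X' \to S$ by rigidity; so it factors as $\psi' \circ \psi$ for some $\psi' : S \to Q_X$, giving $f^*\omega = \psi^*\alpha$ with $\alpha = (\psi')^*\omega'$. The surjectivity of $\psi$ combined with the nowhere vanishing of $f^*\omega$ from $(\ast)_g$ forces $\alpha$ to be nowhere vanishing on $S$; passing to a smooth resolution $\tilde S \to S$ produces a nowhere-vanishing $1$-form on the smooth general-type variety $\tilde S$ (birational to the Iitaka image of $X$), contradicting \cite{PS14}.

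For the first inequality $\dim F \ge \dim A - \dim Q_X$, since $F$ is a general fiber of the Iitaka fibration we have $\kappa(F) = 0$, and by the Kawamata--Ueno theorem the Albanese map $F \onto \Alb_F$ is surjective. Hence $\dim F \ge \dim \Alb_F \ge \dim A'$, the last inequality coming from the surjection $\Alb_F \onto A'$. For the construction of $B$, I would apply Poincar\'e reducibility (with respect to a polarization on $A$) to obtain a connected abelian subvariety $K \subset A$ complementary to $A'$, so that $A' + K = A$ and $A' \cap K$ is finite. Setting $B := A/K$ yields an abelian variety of dimension $\dim A' \ge \dim W$ with connected kernel $K$; the map $A' \hookrightarrow A \onto B$ is a finite isogeny, and combining with the surjections $F \onto \Alb_F \onto A'$ (by Kawamata--Ueno and the definition of $A'$) shows that $F \to A \to B$ is surjective.

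The main obstacle will be the clean invocation of \cite{PS14} in the second inequality: since $S$ may well be singular, one must pass to a smooth resolution $\tilde S$ and verify that the lift of $\alpha$ remains nowhere vanishing on $\tilde S$---in particular, does not acquire spurious zeros along the exceptional divisor of $\tilde S \to S$---before applying Popa--Schnell. This can be handled by choosing birational models of $X'$ and $S$ coherently through a common resolution to which $f^*\omega$ and $\alpha$ can both be pulled back without loss.
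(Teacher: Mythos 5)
Your treatment of the first inequality (Kawamata's surjectivity of $\alb_F$ for the $\kappa=0$ fiber, plus the definition of $Q_X$) and your construction of $B$ (Poincar\'e reducibility giving a complement $K$ to the image $A'$ of $\Alb_F$, with $B=A/K$ and $A'\to B$ an isogeny) match the paper's argument in substance; the paper realizes the complement by dualizing with a polarization, which is the same construction.

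The gap is in your proof of the second inequality. You try to reduce to the \emph{general type} case of Popa--Schnell by descending the nowhere vanishing form to a smooth model $\tilde S$ of the Iitaka base, and this fails for two reasons. First, and most importantly, the base of the Iitaka fibration is in general \emph{not} of general type (e.g.\ a K3-fibration over $\P^1$ with $\kappa(X)=1$ has Iitaka base $\P^1$; only the base \emph{pair} with its discriminant is of log general type), so even a nowhere vanishing $1$-form on $\tilde S$ would produce no contradiction with \cite{PS14}. Second, even where the base happens to be of general type, the descent step is not valid: if $\tilde S\to S$ is a resolution, the pullback of $\omega'$ along $\tilde S\to Q_X$ can acquire zeros on the exceptional locus, since at such a point the image of the differential may land inside $\ker\omega'$ (already the blowup of a point on an abelian surface shows a translation-invariant form acquiring a zero on the exceptional curve). ``Choosing models coherently through a common resolution'' does not repair this, because any common resolution is birational to $X'$, not to $S$, and what you need is a nowhere vanishing form on some smooth projective model of $S$ itself. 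The correct and much shorter route, which is what the paper does, is to apply \cite[Thm.~2.1]{PS14} \emph{directly to the composite morphism} $X'\to A\onto Q_X$: since this map contracts the general fiber of the Iitaka fibration, it factors birationally through it, i.e.\ $H^0(X',\omega_{X'}^{\otimes d}\otimes f^*\L^{-1})\neq 0$ for an ample $\L$ on $Q_X$, and Theorem~2.1 then says every form pulled back from $Q_X$ has a zero on $X'$ --- no descent to $S$ is needed. With that substitution your computation $W\cap H^0(Q_X,\Omega_{Q_X})=0\Rightarrow \dim W\le \dim A-\dim Q_X$ goes through as you wrote it.
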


\begin{proof}
The first inequality follows from the fact that $F \onto \Alb_F$ is surjective (in fact a contraction) \cite[Thm.~1]{Kawamata81} and $\dim{Q_X} \ge \dim{A} - \dim{\Alb_F}$ (by definition of $Q_X$). For the second inequality, note that the composition morphism $f \colon X \to \Alb_{X} \to Q_X$ birationally factors through the Iitaka fibration on $X$. Therefore, we may apply \cite[Thm.~2.1]{PS14} to show that
\[ W \cap f^{*} H^{0}(Q_X, \Omega_{Q_X}^1) = \{ 0 \}, \]
which implies that $\dim W + \dim Q_X \leq \dim \Alb_{X}$. Note that $f^{\ast}$ is injective since $A \to Q_X$ is smooth and surjective.
\par 
Varying the fiber $F$, rigidity implies that the images of $F \to A$ are translates of a fixed abelian subvariety, say $B_0 \subset A$. Fixing a polarization on $A$ and dualizing yields a morphism
\[ q : A \to A^\vee \onto B \coloneq B_0^\vee \]
such that $B_0 \subset A$ maps surjectively onto $B$. We claim that $F \to B$ is surjective, which follows from the fact that $F \to B_0$ is surjective. Note that $Q_X = \Alb_X / B_0$ so 
\[ \dim{B} = \dim{B_0} = \dim{\Alb_X} - \dim{Q_X} \ge \dim{W} \]
Of course, by Stein factorization, we can assume that $A \to B$ has connected fibers. 
\end{proof}

Combining Theorem~\ref{thm:Iitaka_decomposition} with Lemma~\ref{lemma:dimesnion_and_PLI_forms} we obtain a generalization of the main result of \cite{CCH23}.

\begin{thm} \label{thm:Iitaka_decomposition_PLI}
Let $X$ be a smooth projective variety admitting a good minimal model and $f : X \to A$ be a map to an abelian variety satisfying property $(\ast)_g$. Then there exists a quotient with connected kernel $q : A \to B$ to an abelian variety $B$ of dimension $\ge g$ and a birational map $X \birat Z \times^G B'$ making the diagram
\begin{center}
    \begin{tikzcd}
    X \arrow[d, dashed] \arrow[r, "f"] & A \arrow[d, "q"] 
    \\
    Z \times^G B' \arrow[r] & B 
    \end{tikzcd}
\end{center}
commute. Here, $B' \to B$ is an isogeny with kernel $G$, and $Z$ is a smooth projective variety with a $G$-action.
\end{thm}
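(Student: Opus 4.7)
The plan is to reduce to a good minimal model $X^{\min}$ of $X$ and then chain Lemma~\ref{lemma:dimesnion_and_PLI_forms} with Theorem~\ref{thm:Iitaka_decomposition} applied to the Iitaka fibration of $X^{\min}$. By hypothesis, $X^{\min}$ exists and has terminal $\Q$-factorial (in particular klt $\Q$-Gorenstein) singularities with $K_{X^{\min}}$ semiample, giving an Iitaka fibration $\psi : X^{\min} \to S$ with $K_{X^{\min}} = \psi^{*} L$ for a suitable $L$ on $S$ (after passing to a multiple and invoking base-point-freeness of $n K_{X^{\min}}$). Since $A$ is an abelian variety, the map $f : X \to A$ descends through every step of an MMP running $X \rat X^{\min}$, each of which contracts $K_X$-negative extremal curves that must collapse in $A$; this gives an induced morphism $f^{\min} : X^{\min} \to A$ that pulls back the subspace $W \subset H^0(A, \Omega_A)$ witnessing $(\ast)_g$.

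Next, I would apply Lemma~\ref{lemma:dimesnion_and_PLI_forms} to this setup. The conclusion of the lemma depends only on the image of a general fiber of the Iitaka fibration in $A$, which is birationally invariant across $X$, $X^{\min}$, and any common resolution, so the lemma applies in this form. It produces a quotient with connected kernel $q : A \onto B$ to an abelian variety of dimension $\dim{B} \ge g$ such that the general fiber $F$ of $\psi$ maps surjectively onto $B$.

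Finally, I would apply Theorem~\ref{thm:Iitaka_decomposition} to $\psi : X^{\min} \to S$ together with the composite target $q \circ f^{\min} : X^{\min} \to B$. Since $F \onto B$ is surjective, the image of a general fiber of $\psi$ in $B$ has dimension $d = \dim{B}$, so the quotient supplied by the theorem is an isomorphism of $B$; what we actually extract is an isogeny $B' \to B$ with kernel $G$, a smooth projective $G$-variety $Z$, and a birational map $X^{\min} \birat Z \times^G B'$ commuting with the projection to $B$. Composing the birational identification $X \birat X^{\min}$ with this decomposition and with the quotient $q : A \to B$ yields the claimed commuting square.

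The main subtlety I expect is not in the two-step chaining but in the bookkeeping around the good minimal model: one must confirm that $f^{\min}$ really exists through every flip of the MMP (standard, since abelian varieties have no rational curves) and that Theorem~\ref{thm:Iitaka_decomposition} truly applies to $\psi$ with the form $K_{X^{\min}} = \psi^{*} L$ at the level required (an issue only on $S$ and handled by the semiampleness of $K_{X^{\min}}$). The real content is already packaged inside Theorem~\ref{thm:Iitaka_decomposition}, which itself descends from the key decomposition Theorem~\ref{thm:abelian_decomposition}.
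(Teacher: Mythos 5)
Your proposal is correct and follows essentially the same route as the paper: pass to the Iitaka fibration $\psi : X^{\min} \to S$ of a good minimal model, invoke Lemma~\ref{lemma:dimesnion_and_PLI_forms} to produce the quotient $q : A \to B$ with $\dim B \ge g$ onto which the general fiber of $\psi$ surjects, and then apply Theorem~\ref{thm:Iitaka_decomposition} to conclude. The extra bookkeeping you flag (descent of $f$ through the MMP, the identity $K_{X^{\min}} = \psi^* L$) is exactly what the paper's terse proof leaves implicit, and your handling of it is fine.
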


\begin{proof}
Let $\psi : X^{\min} \to S$ be the Iitaka fibration of some good minimal model.
By Lemma~\ref{lemma:dimesnion_and_PLI_forms} we get a quotient with connected kernel $q : A \to B$ such that the fiber $F$ of $\psi$ maps surjectively onto $B$ and $\dim{B} \ge g$. Therefore, we apply Theorem~\ref{thm:Iitaka_decomposition} and conclude.
\end{proof}

\section{Proofs of the Main Results}

Now we have all the ingredients to prove the main results.

\subsection{Conjecture 1.8 of \cite{HS21(1)}}

Assuming the abundance conjecture and termination of flops we prove a conjecture of Hao and Schreieder \cite[Conjecture 1.8]{HS21(1)}.

\begin{thm}\label{thm:birational_to_isotrivial_map}
Let $X \to A$ be a smooth morphism from a smooth projective variety $X$ to an abelian variety $A$. If $\kappa(X) \ge 0$ and assuming $X$ admits a good minimal model then there is a birational model
\begin{center}
\begin{tikzcd}
X \arrow[rr, dashed] \arrow[rd] & & X' \arrow[ld]
\\
& A
\end{tikzcd}
\end{center}
with $X' \to A$ an isotrivial\footnote{flat with isomorphic fibers} smooth projective morphism (i.e. an analytic fiber bundle).
\end{thm}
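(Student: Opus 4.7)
The plan is to exploit the generalization already packaged in Theorem~\ref{thm:Iitaka_decomposition_PLI}; the smoothness hypothesis forces the quotient appearing there to be an isomorphism, so the birational model produced by that theorem is automatically a smooth isotrivial fibration over $A$ itself.

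First I would observe that smoothness of $f : X \to A$ gives the short exact sequence
\[ 0 \to f^*\Omega_A^1 \to \Omega_X^1 \to \Omega_{X/A}^1 \to 0 \]
of locally free sheaves on $X$ with $f^*\Omega_A^1$ a subbundle. Since $\Omega_A^1$ is globally trivialized by $H^0(A,\Omega_A^1)$, pullback produces a trivial subbundle $\mathcal{O}_X^{\oplus \dim A} \hookrightarrow \Omega_X^1$ factoring through $f^*\Omega_A^1$. By the remark characterizing $(\ast)_g$ via trivial subbundles of $\Omega_X^1$, this shows that $f : X \to A$ satisfies $(\ast)_g$ with $g = \dim A$, taking $W = H^0(A,\Omega_A^1)$.

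Next I would feed this into Theorem~\ref{thm:Iitaka_decomposition_PLI} with $g = \dim A$. That theorem produces a quotient $q : A \to B$ with connected kernel, an abelian variety $B$ with $\dim B \geq \dim A$, an isogeny $B' \to B$ with kernel $G$, and a birational map $X \dashrightarrow Z \times^G B'$ compatible with $f$ and the projection to $B$. Since $q$ is surjective we must have $\dim B = \dim A$, so $q$ is an isogeny; its kernel is also connected and we are in characteristic zero, so the kernel is trivial and $q$ is an isomorphism. We may therefore identify $B$ with $A$ and view $X' := Z \times^G B'$ as sitting over $A$ with $X \dashrightarrow X'$ compatible with the given maps to $A$.

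Finally I would check that $X' \to A$ enjoys all the asserted properties. Because $G$ is the kernel of an isogeny, it acts freely on $B'$, so $B' \to B = A$ is a $G$-torsor, in particular finite étale; pulling $X' \to A$ back along $B' \to A$ recovers the trivial projection $Z \times B' \to B'$. Hence $X' \to A$ is trivialized by the finite étale cover $B' \to A$, making it a smooth isotrivial morphism with fiber $Z$ (in particular an analytic fiber bundle). Smoothness of $X'$ itself follows because $Z \times B'$ is smooth and $Z \times B' \to X'$ is the étale quotient by the free anti-diagonal $G$-action described in the remark after Theorem~\ref{thm:abelian_decomposition}. I do not anticipate any real obstacle: all of the serious work — the decomposition result, the Iitaka fibration argument, and the existence of the smooth $G$-equivariant compactification $Z$ — has already been carried out, and the only genuinely new input is the dimension count that collapses $q$ to an isomorphism.
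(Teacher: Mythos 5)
Your proposal is correct and follows essentially the same route as the paper: smoothness of $f$ makes $f^*\Omega_A^1$ a trivial subbundle of $\Omega_X^1$, so $(\ast)_{\dim A}$ holds, Theorem~\ref{thm:Iitaka_decomposition_PLI} applies, the dimension count forces the quotient $A \to B$ to be an isomorphism, and $X' = Z \times^G B'$ is the desired smooth isotrivial model. The only cosmetic difference is that the paper additionally passes to a $G$-equivariant resolution of the fiber, which is redundant given that Theorem~\ref{thm:Iitaka_decomposition_PLI} already asserts $Z$ is smooth, so your omission of that step is harmless.
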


\begin{proof}
We apply Theorem~\ref{thm:Iitaka_decomposition_PLI} to $X \to A$ to conclude that $X \birat Y \times^G A'$, where $A' \to A$ is an isogeny. Let $\wt{Y} \to Y$ be a $G$-equivariant resolution of singularities. Then $X' = \wt{Y} \times^G A'$ gives the requisite birational model with a smooth morphism $X' \to A'/G = A$
\end{proof}

\subsection{Conjecture C of \cite{MP21}}

Assuming MMP and abundance we also prove Conjecture C of \cite{MP21}

\begin{thm} \label{thm:MP_conjecture_C}
Let $f : X \to A$ be an algebraic fiber space, with $X$ a smooth projective variety, $A$ an abelian variety, and general fiber $F$ with $\kappa(F) \ge 0$. Assume that $F$ admits a good minimal model. If $f$ is smooth away from a closed set of codimension at least $2$ in $A$, then there exists an isogeny $A' \to A$ such that
\[ X \times_A A' \birat F \times A' \]
i.e.\ $X$ becomes birational to a product after an \etale base change.
\end{thm}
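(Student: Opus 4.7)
The proof proceeds by induction on $\dim A$, with the base case $\dim A = 0$ trivial. The principal tool is Theorem~\ref{thm:Iitaka_decomposition} applied to the Iitaka fibration of a good minimal model of $X$.

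First, I would establish that $X$ itself inherits a good minimal model $X^{\min}$ from that of $F$: since $K_A = 0$, the relative MMP of $X$ over $A$ is essentially fiber-wise, and under the MMP conjectures one obtains $X^{\min}$ with $K_{X^{\min}}$ semi-ample. Because $X^{\min}$ has klt (hence rational) singularities, the induced rational map extends to a morphism $f^{\min}: X^{\min} \to A$. Let $\psi: X^{\min} \to S$ be the Iitaka fibration and let $d$ be the dimension of the image of a general $\psi$-fiber $F_\psi$ in $A$ under $f^{\min}$. Applying Theorem~\ref{thm:Iitaka_decomposition} to $\psi$ and $f^{\min}$ yields a quotient $q: A \twoheadrightarrow B$ with connected kernel $K$ (so $\dim K = \dim A - d$), an isogeny $B' \to B$ with kernel $G$, and a birational equivalence $X \birat Z \times^G B'$ over $B$, where $Z$ is birational to the general fiber of $X \to B$.

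If $d = \dim A$, then $K = 0$, $B = A$ with isogeny $A' := B' \to A$, and $Z \birat F$; a direct computation shows $X \times_A A' \birat (F \times^G A') \times_A A' \cong F \times A'$, completing the proof. Otherwise $1 \le d < \dim A$ and $K$ is positive-dimensional but strictly smaller than $A$. For generic $b \in B$ the fiber $X_b$ of $X \to B$ is smooth projective, and $X_b \to q^{-1}(b) \cong K$ is an algebraic fiber space with general fiber $F$, smooth in codimension $1$: a generic translate $q^{-1}(b)$ meets the codimension-$\ge 2$ non-smooth locus $S \subset A$ in dimension at most $\dim S + \dim K - \dim A \le \dim K - 2$. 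The inductive hypothesis applied to $X_b \to K$ produces an isogeny $K' \to K$ with $X_b \times_K K' \birat F \times K'$. One then assembles $B' \to B$ and $K' \to K$ into an isogeny $A'' \to A$ by pulling back the extension $0 \to K \to A \to B \to 0$ along $K' \to K$ to obtain $0 \to K' \to A'' \to B' \to 0$; combining this with the base-change identity $X \times_A A_1 \birat X_b \times B'$ (where $A_1 := A \times_B B'$) deduces $X \times_A A'' \birat F \times A''$.

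The main obstacles are twofold: first, the combinatorial bookkeeping needed to construct $A''$ out of $B'$ and $K'$ via the extension structure of $A$; and second, the degenerate case $d = 0$, in which Theorem~\ref{thm:Iitaka_decomposition} yields no useful information. The latter corresponds to maximal variation of the family $X \to A$ (equality in Iitaka's $C_{n,m}$); one expects the smoothness-in-codimension-$1$ hypothesis to preclude this, either via a Kawamata--Viehweg type isotriviality theorem for the extremal case of $C_{n,m}$ over an abelian base, or by applying Theorem~\ref{thm:abelian_decomposition} directly to the relative Albanese of $X^{\min} \to A$ (using that the Iitaka fibers are Calabi--Yau when $d = 0$) to force $d \ge 1$ at every level of the induction.
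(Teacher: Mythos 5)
There is a genuine gap, and it sits exactly where the codimension-$2$ smoothness hypothesis has to do its work. The paper's proof is not an induction at all: after invoking \cite{Lai11} to produce a good minimal model $X^{\min}$ (this is the correct reference for your first step --- the base being of maximal Albanese dimension is what Lai's theorem needs, not a fiberwise MMP heuristic), the entire content is to show that the general fiber of the Iitaka fibration of $X^{\min}$ \emph{dominates} $A$, i.e.\ that your $d$ equals $\dim A$, so that Theorem~\ref{thm:Iitaka_decomposition} applies with $B = A$ on the nose. This is extracted from the proof of Corollary~D in \cite{MP21}: under the codimension-$2$ smoothness hypothesis the relative canonical model $X_{\can,f}$ over $A$ becomes $X_{\can}\times A'$ after an isogeny, and since $X^{\min}\rat X_{\can,f}$ is dominant, so is $X^{\min}\to X_{\can}\times A$. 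Your proposal never establishes this; instead it branches on $d$ and defers the critical case. In particular, when $d=0$ the quotient $B$ is a point, $K=A$, and your induction does not reduce $\dim A$ at all --- the argument does not terminate, and your handling of that case is explicitly speculative (``one expects\dots either via\dots or by\dots''). But $d<\dim A$ is precisely what must be excluded, and excluding it requires the Meng--Popa input (or an equivalent: $d<\dim A$ would force $\kappa(X)>\kappa(F)$, contradicting their theorem). So the one idea that makes the theorem true is absent from the proposal.

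There are also secondary problems in the intermediate case $0<d<\dim A$, which would need repair even if the $d=0$ case were disposed of: the inductive hypothesis produces an isogeny $K'_b\to K$ that a priori depends on the generic point $b\in B$, and you would need to show these can be chosen uniformly (or spread out over a dense open) before assembling anything global; and the assembly itself is misdescribed --- one cannot ``pull back'' the extension $0\to K\to A\to B\to 0$ along $K'\to K$ (pullback is an operation on the quotient, pushout on the sub), though Poincar\'e reducibility lets one manufacture a suitable isogeny $A''\to A$ after further work. These are fixable; the missing domination statement is not a bookkeeping issue but the heart of the proof.
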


\begin{proof}
By \cite{Lai11} $X$ admits a good minimal model $X^{\min}$. Then it suffices to show that $X^{\min} \to A \times X_{\can}$ is surjective so when we apply Theorem~\ref{thm:Iitaka_decomposition} the resulting quotient $A \to B$ is an isomorphism. There are dominant rational maps
\[ X \rat X^{\min} \rat X_{\can, f} \rat X_{\can} \times A \]
But by \cite[Proof of Corollary~D]{MP21} there is an isogeny $A' \to A$ such that 
\[ X_{\can, f} \times_A A' \cong F_{\can} \times A' \cong X_{\can} \times A' \]
and $X^{\min} \rat X_{\can, f}$ is dominant so we conclude. Therefore Theorem~\ref{thm:Iitaka_decomposition} applies to give $X^{\min} \cong Z \times^G A'$, so choosing a resolution $\wt{Z} \to Z$ gives the requisite birational model $X' \cong \wt{Z} \times^G A'$, which has a smooth map $X' \cong Z \times^G A' \to A'/G = A$ and $X' \times_A A' \cong Z \times A'$.
\end{proof}

\subsection{Conjecture B of \cite{CCH23}}

Assuming the abundance conjecture, we also verify a conjecture of Chen, Hao, and the author \cite[Conjecture~B]{CCH23}.

\begin{thm} \label{thm:PLI_number}
Let $X$ be a smooth projective good minimal model. If $g$ is the maximal number of pointwise linearly independent $1$-forms on $X$, then there exists a smooth morphism $X \to A$ over an abelian variety of dimension $\dim{A} = g$.
\end{thm}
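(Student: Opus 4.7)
Choose pointwise linearly independent 1-forms $\omega_1, \ldots, \omega_g \in H^0(X, \Omega^1_X)$ spanning a $g$-dimensional subspace $W$. Via the canonical identification $H^0(X, \Omega^1_X) = H^0(\Alb_X, \Omega^1_{\Alb_X})$, regard $W$ as a subspace of $H^0(\Alb_X, \Omega^1_{\Alb_X})$, so that the Albanese morphism $\alb \colon X \to \Alb_X$ satisfies property $(\ast)_g$. Apply Theorem~\ref{thm:Iitaka_decomposition_PLI} to $\alb$: this produces a quotient with connected kernel $q \colon \Alb_X \onto B$ with $\dim B \ge g$ and a birational equivalence $X \birat X' \coloneq \wt{Z} \times^G B'$ over $B$, where $B' \to B$ is an isogeny with kernel $G$, $\wt{Z}$ is smooth projective with a $G$-action, and $X' \to B$ is a smooth isotrivial morphism (trivialized by the isogeny $B' \to B$, as in the proof of Theorem~\ref{thm:birational_to_isotrivial_map}).

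Set $f \coloneq q \circ \alb \colon X \to B$, a globally defined morphism since $X$ is smooth and $B$ is an abelian variety. The goal is to conclude $f$ is itself smooth and $\dim B = g$. Smoothness of $f$ at $x \in X$ amounts to the sheaf map $f^*\Omega^1_B = q^*H^0(B, \Omega^1_B) \otimes \cO_X \to \Omega^1_X$ being injective at $x$, equivalently to the $(\dim B)$-dimensional subspace $q^*H^0(B, \Omega^1_B) \subset H^0(X, \Omega^1_X)$ being pointwise linearly independent on $X$. Smoothness of $X' \to B$ guarantees this PLI property on $X'$; I would then transfer it from $X'$ to $X$, using that $X$ and $X'$ are smooth projective birational varieties over $B$ sharing the same Albanese, and that the good minimal model assumption on $X$ rules out the birational modifications that could destroy PLI (e.g.\ the blow-up of Example~\ref{example:Schreieder_Yang}). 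Maximality of $g$ then forces $\dim B \le g$, hence $\dim B = g$, so by a dimension count $q^*H^0(B, \Omega^1_B) = W$ and $f^*\Omega^1_B \hookrightarrow \Omega^1_X$ coincides with the sub-bundle $W \otimes \cO_X \hookrightarrow \Omega^1_X$, which is pointwise injective by the PLI hypothesis on $W$. Dualising gives that $df$ is everywhere surjective, and since both $X$ and $B$ are smooth $f$ is a smooth morphism.

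The main obstacle is the transfer of the PLI property from the birational model $X'$ back to $X$ itself, because PLI is not a birational invariant in general (Example~\ref{example:Schreieder_Yang}); it is precisely the good minimal model hypothesis that makes this possible, since smooth projective varieties birational to a common good minimal model agree in codimension one, and the PLI failure locus of a 1-form pulled back from $B$ via $\alb$ depends only on the rank drop locus of the Albanese differential. Carrying out this transfer rigorously, or equivalently verifying directly that $W^\perp \subset T_0\Alb_X$ is the Lie algebra of an abelian subvariety of $\Alb_X$ (so that the quotient $q$ can be arranged so that $W = q^*H^0(B, \Omega^1_B)$ on the nose), is the key technical step; once it is done the smoothness and the equality $\dim B = g$ follow automatically.
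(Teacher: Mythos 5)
You have the right opening move --- apply Theorem~\ref{thm:Iitaka_decomposition_PLI} to the Albanese map to obtain $q : \Alb_X \to B$ with $\dim B \ge g$ and a birational model $X \birat \wt{Z} \times^G B'$ smooth over $B$ --- and your use of maximality of $g$ to force $\dim B \le g$ matches the paper. But the proposal has a genuine gap at exactly the hard point, which you yourself flag as ``the key technical step'' without carrying it out: upgrading the birational statement to smoothness of $q \circ \mathrm{alb}$ on $X$ itself. Neither of the two justifications you sketch works. First, ``smooth projective varieties birational to a common good minimal model agree in codimension one'' does not transfer pointwise linear independence: two minimal models isomorphic in codimension one may differ along a closed set of codimension $\ge 2$, and the zero locus of a holomorphic $1$-form carries no a priori codimension bound, so the forms could vanish precisely on the locus where the models disagree. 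Second, the hope that $\dim B = g$ forces $q^*H^0(B,\Omega^1_B) = W$ is unfounded: Lemma~\ref{lemma:dimesnion_and_PLI_forms} only gives $W \cap f^*H^0(Q_X,\Omega^1_{Q_X}) = 0$, i.e.\ $W$ projects isomorphically onto the summand $q^*H^0(B,\Omega^1_B)$ of $H^0(\Alb_X,\Omega^1)$; it need not be contained in it, so PLI of $W$ does not give PLI of $q^*H^0(B,\Omega^1_B)$.

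The paper closes this gap with Theorem~\ref{thm:factoringthroughAV}, whose proof is the real content: one runs a $G$-equivariant MMP on $Z$ to produce a minimal model $Z^{\min}$, so that $X$ and $B' \times^G Z^{\min}$ are two birational minimal models and hence connected by a sequence of flops (Kawamata); one then shows, following \cite{HS21(1)} and \cite{Hao23}, that every flop of $B' \times Z^{\min}$ arises from a flop of $Z^{\min}$, whence $X \cong B' \times^G Z^{+}$ as varieties over $B$ --- an honest isomorphism, not merely a birational one --- and smoothness of $X \to B$ follows because $G$ acts freely on $B'$. Your proposal is therefore incomplete rather than wrong in strategy: to finish it you would need to reproduce this flop analysis (or an equivalent argument), since nothing softer about codimension-one agreement or about the position of $W$ inside $H^0(\Alb_X,\Omega^1)$ suffices.
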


In this section, we first prove a direct generalization of \cite[Theorem 4.5]{CCH23} that implies Theorem~\ref{thm:PLI_number}.

\begin{theorem} \label{thm:factoringthroughAV}
Let $X$ be an $n$-dimensional smooth projective good minimal model, and let $f \colon X \rightarrow A$ be a morphism to an abelian variety $A$. Then the following are equivalent:
\begin{enumerate}
    \item $f : X \to A$ satisfies $(\ast)_g$;
    \item $X$ admits a smooth morphism $\varphi \colon X \rightarrow B$ to an abelian variety $B$ of dimension $\ge g$, and there is a quotient map $q : A \to B$ with connected kernel such that $\varphi$ fits into the commutative diagram
    \begin{center}
    \begin{tikzcd}
    X \arrow[d, swap, "\varphi"] \arrow[r, "f"] & A \arrow[ld, "q", two heads] 
    \\
    B
    \end{tikzcd}
    \end{center}
\end{enumerate}
When either of these holds, there is an isomorphism $X \iso B' \times^G Z$ compatible with $\varphi$, where $B' \to B$ is an isogeny with kernel $G$, and $Z$ is a smooth minimal model with a $G$-action.
\end{theorem}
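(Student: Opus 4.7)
For the direction $(2) \Rightarrow (1)$ I use a direct pullback: take $W = q^* H^0(B, \Omega_B) \subset H^0(A, \Omega_A)$, which has dimension $\dim B \ge g$ since $q$ has connected kernel and hence $q^*$ is injective on global $1$-forms. For any nonzero $\omega = q^* \eta \in W$, the equality $f^* \omega = \varphi^* \eta$ together with smoothness of $\varphi$ and translation invariance of $\eta$ forces $\varphi^* \eta$ to have no zeros on $X$. Restricting to any $g$-dimensional subspace of $W$ verifies $(\ast)_g$.

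For $(1) \Rightarrow (2)$ together with the final explicit isomorphism, the starting point is Theorem \ref{thm:Iitaka_decomposition_PLI}, which produces a quotient $q \colon A \to B$ with connected kernel and $\dim B \ge g$, along with a birational map $\mu \colon X \birat Z \times^G B'$ compatible with the maps to $A$ and $B$; here $B' \to B$ is an isogeny with kernel $G$ and $Z$ is smooth projective with a free $G$-action on $B'$. Since $B$ is an abelian variety and $X$ is smooth, the induced rational map $X \rat B$ extends to a morphism $\varphi \colon X \to B$. It remains to upgrade $\mu$ to a global isomorphism $X \iso Z \times^G B'$, which will simultaneously give smoothness of $\varphi$ and exhibit $Z$ as a smooth minimal model.

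Unpacking the proof of Theorem \ref{thm:Iitaka_decomposition_PLI} with $X$ itself in place of its good minimal model, Theorem \ref{thm:abelian_decomposition} applied over the flat locus $U \subset S$ of the Iitaka fibration $\psi \colon X \to S$ yields an actual $S$-isomorphism $\psi^{-1}(U) \iso Z_U \times^G B'$ rather than merely a birational equivalence. The main obstacle I anticipate is extending this isomorphism across the preimage $\psi^{-1}(S \setminus U)$ of the non-flat locus. My strategy is to prove $\varphi \colon X \to B$ is smooth everywhere: on $\psi^{-1}(U)$ this is immediate from the explicit bundle structure, while on the complement the hypothesis gives PLI only for the given $g$-dimensional $W$, which may be strictly smaller than $q^* H^0(B, \Omega_B)$. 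Hence smoothness of $\varphi$ requires upgrading PLI of $f^* W$ to PLI of the full $\dim B$-dimensional subspace $f^* q^* H^0(B, \Omega_B) \subset H^0(X, \Omega_X)$. I expect this upgrade to follow from the isotrivial decomposition on $U$ combined with a positivity argument in the spirit of \cite{PS14} applied to the factorization $X \to A \to B$. Once smoothness of $\varphi$ is established, isotriviality propagates from $U$ to all of $S$ by properness and irreducibility, the local isomorphism extends globally, and $K_Z$ is trivial because $K_X = \psi^* L$ restricts trivially to fibers; thus $Z$ is a smooth minimal model as claimed.
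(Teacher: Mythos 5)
The direction $(2)\Rightarrow(1)$ and the opening move of $(1)\Rightarrow(2)$ (invoking Theorem~\ref{thm:Iitaka_decomposition_PLI} to get $q : A \to B$ and a birational map $X \birat B' \times^G Z$) match the paper. The problem is the step you yourself flag as the "main obstacle": upgrading the birational equivalence to a global isomorphism. Your strategy is to first prove $\varphi : X \to B$ is smooth everywhere by "upgrading" PLI of the given $g$-dimensional $W$ to PLI of all of $q^*H^0(B,\Omega_B)$, and then to let "isotriviality propagate by properness and irreducibility." Neither step is substantiated. The PLI upgrade is stated as an expectation with no argument, and there is no a priori reason the forms in $W$ even lie in $q^*H^0(B,\Omega_B)$ (the quotient $B$ is produced by dualizing the image $B_0 \subset A$ of the Iitaka fiber, not from $W$ itself). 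Moreover, even granting smoothness of $\varphi$ everywhere, the global isomorphism $X \iso Z\times^G B'$ does not follow by "propagation": the trivializing data over $\psi^{-1}(U)$ is an action of an abelian scheme of fiberwise automorphisms over $U \subset S$, and these automorphisms have no evident extension across $\psi^{-1}(S\setminus U)$. Note also that the paper's logic runs in the opposite direction: smoothness of $\varphi$ is \emph{deduced} from the global isomorphism at the very end, not used to produce it.

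The paper's actual mechanism is an MMP/flops argument that crucially uses the hypothesis that $X$ itself is a smooth good minimal model. One runs a $G$-equivariant MMP on $Z$ (possible by \cite{Lai11}, \cite{BCHM10}, \cite{Prokhorov21} since $Z$ is a $K$-trivial fibration over a base of log general type) to obtain $Z^{\min}$, so that $B'\times^G Z^{\min}$ is a minimal model birational to the minimal model $X$. By \cite{Kawamata08} the birational map between them is a composition of flops, hence an isomorphism in codimension one; the key structural input is then that every flop of $B'\times Z^{\min}$ arises from a flop of $Z^{\min}$ (the argument of \cite[Thm.~5.10]{HS21(1)}, c.f. \cite[Thm.~3.4]{Hao23}), so the product structure survives the flops and one obtains $X' \iso B'\times Z^+$ for a $G$-equivariant flop $Z^+$ of $Z^{\min}$, smooth because $X'$ is. Without this (or an equivalent substitute), your proof does not close.
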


\begin{proof}
(2) $\implies$ (1) is immediate. Conversely, suppose (1) holds for $X$. By Theorem~\ref{thm:Iitaka_decomposition_PLI} applied to $f : X \to A$, there exists a quotient $q : A \to B$ with $\dim{B} \ge g$ and a birational map
\[ X \birat B' \times^G Z \]
where $B' \rightarrow B$ is an isogeny with kernel $G$. From here, we repeat the proof of \cite[Theorem 4.5]{CCH23}.
\par 
Since $X \birat B' \times^G Z$, there is an \'etale $G$-cover $X'$ of $X$ admitting a $G$-equivariant birational map $\alpha\colon X' \to B' \times Z$, where $B'\to B$ is an isogeny with kernel $G$. Moreover, $\alpha$ descends to a birational map $\overline{\alpha}\colon X\dashrightarrow B'\times^G Z$. Since $Z$ was constructed as a $K$-trivial fibration over the Iitaka base $S$, which is of log general type, by the main results \cite{Lai11}, \cite{BCHM10}, and \cite{Prokhorov21}, we can run a $G$-equivariant MMP with scaling along a $G$-invariant ample divisor to get a minimal model $Z^{\min}$ of $Z$ with a compatible $G$-action. Hence we get a birational map between two minimal models $X \dashrightarrow B' \times^G Z^{\min}$, which is a composition of flops by \cite{Kawamata08}. In particular, we have an isomorphism between codimension one open subsets of $X$ and $B' \times^G Z^{\min}$. Let $U$ denote the largest common open subvariety. Therefore the following diagram commutes
    \begin{center}
    \begin{tikzcd}
    X' \arrow[rr, dashed, "\alpha"] \arrow[d, "p_1"] && B' \times Z^{\min} \arrow[d, "p_2"]
    \\
    X \arrow[rr, dashed, "\overline{\alpha}"] && B' \times^G Z^{\min} 
    \\
    & U \arrow[ul, hook, "i_1"] \arrow[ur, hook, "i_2"']
    \end{tikzcd}
    \end{center}
Note that $X'$ and $B' \times^G Z^{\min}$ are birational minimal models. Hence $\alpha$ is again a composition of flops and $p_1^{-1}(U)$ is isomorphic to $p_2^{-1}(U)$ via $\alpha$ since $p_i$ are \etale maps. An argument in the proof of \cite[Thm.~5.10]{HS21(1)} (c.f. \cite[Thm.~3.4]{Hao23}) shows that all flops of $B' \times Z^{\min}$ arise from flops of $Z^{\min}$. Hence $\alpha : p_1^{-1}(U) \iso p_2^{-1}(U) = B' \times V$ for some open $V \subset Z^{\min}$, using the fact that $U$ is chosen to be maximal among opens over which $\overline{\alpha}$ is defined. From the description of $p_2$, the $G$-action on $A'\times V$ is diagonal. Hence there exists a sequence of $G$-equivariant flops $Z^{\min} \birat Z^+$ such that $X' \iso B' \times^G Z^+$. Since $X'$ is smooth, $Z^+$ must be smooth as well. Finally, smoothness of the map $\varphi : X \to B$ is immediate from the commutativity of the diagram
\begin{center}
    \begin{tikzcd}
        X \arrow[rd, "\varphi"'] \arrow[r, "\sim"] & B' \times^G Z \arrow[d, "\pi_1"]
        \\
        & B
    \end{tikzcd}
\end{center}
and smoothness of $\pi_1$ which follows from the fact that $G$ acts freely on $B'$.
\end{proof}

\begin{proof}[Proof of Theorem~\ref{thm:PLI_number}]
We apply Theorem~\ref{thm:factoringthroughAV} to the Albanese $a : X \to \Alb_X$. Therefore, we have a smooth morphism $X \to B / G$. If $\dim{B} = \dim{B/G} > g$, then there is a frame of PLI forms on $X$ pulled back from $B/G$ contradicting the maximality of $g$. Hence $g = \dim{B}$ so we conclude.
\end{proof}

\section{Extension to the Uniruled Case} \label{section:MRC}

Recall the conjecture of Hao and Schreieder, Conjecture~\ref{conj:HS21} in this text, predicts whenever $X$ admits a nonvanishing $1$-form it is birational to a smooth morphism over some abelian variety. When $X$ admits a good minimal model, our Theorem~\ref{intro:thm:Iitaka_decomposition_PLI} resolves this. However, this necessarily only applies when $\kappa(X) \ge 0$. In this section, we extend our results somewhat to the case of Kodaira dimension $-\infty$ by incorporating the maximally rationally connected (MRC) fibration into the picture. Recall that any uniruled variety $X$ has a unique almost-holomorphic fibration $X \rat Y$, well-defined up to birational equivalence, such that $Y$ is not uniruled and the general fiber is rationally connected \cite[Section~IV.5]{KollarRatCurves}. The following theorem is the analog of Theorem~\ref{thm:Iitaka_decomposition_PLI} allowing for an extension by a rationally connected fibration. From here, to further prove Conjecture~\ref{conj:HS21}, one would need to understand bad reduction in the MRC fibration, which is out of reach using the tools developed here.

\begin{theorem} \label{thm:main_MRC}
Let $X$ be a smooth projective variety equipped with a map $f : X \to A$ to an abelian variety satisfying $(\ast)_g$. Assume the base $Y$ of the MRC fibration $X \rat Y$ admits a good minimal model. Then there exists a quotient with connected kernel $q : A \to B$ to an abelian variety $B$ of dimension $\ge g$ and a birational map $Y \rat Z \times^G B'$ making the diagram
\begin{center}
    \begin{tikzcd}
        X \arrow[r, dashed] \arrow[rr, bend left, "f"] & Y \arrow[d, dashed] \arrow[r] & A \arrow[d, "q"]
        \\
        & Z \times^G B' \arrow[r] & B
    \end{tikzcd}
\end{center}
commute. Here, $B' \to B$ is an isogeny with kernel $G$, and $Z$ is a smooth projective variety with a $G$-action. 
\end{theorem}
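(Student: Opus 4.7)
The plan is to reduce to Theorem~\ref{thm:Iitaka_decomposition} applied to a good minimal model $Y^{\min}$ of the MRC base $Y$. Because $X$ is allowed to have $\kappa(X) = -\infty$, the classical Popa--Schnell theorem \cite[Thm.~2.1]{PS14} cannot be invoked on $X$ directly, and the decisive tool will instead be the generalization Theorem~\ref{thm:generalization_of_PS14}, whose positivity hypothesis on subsheaves of $\Omega^{k}_{X}$ is tailored to exactly this situation.

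First I would realize the MRC fibration as a morphism: pick $Y$ smooth projective and a birational model $\sigma \colon \widetilde{X} \to X$ such that $\pi \colon \widetilde{X} \to Y$ is a morphism. Since the general fiber of $\pi$ is rationally connected, it is contracted by $f \circ \sigma$; by rigidity (together with the fact that a rational map from a smooth variety to an abelian variety extends) this produces a morphism $g \colon Y \to A$ with $f \circ \sigma = g \circ \pi$. Let $Y^{\min}$ be a good minimal model of $Y$; the induced rational map $g^{\min} \colon Y^{\min} \rat A$ extends to a morphism because $Y^{\min}$ has klt (in particular rational) singularities. Let $\psi \colon Y^{\min} \to S$ be the Iitaka fibration, with general fiber $F^{\min}$, and let $B_{0} \subset A$ be the abelian subvariety which is the translation class of $g^{\min}(F^{\min})$, with quotient $q_{Y} \colon A \to Q_{Y} \coloneq A/B_{0}$.

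The heart of the argument is the claim
\[
W \cap q_{Y}^{*} H^{0}(Q_{Y}, \Omega^{1}_{Q_{Y}}) = \{0\},
\]
which, following the pattern of Lemma~\ref{lemma:dimesnion_and_PLI_forms}, forces $\dim B_{0} \geq g$. Suppose for contradiction that some nonzero $\omega = q_{Y}^{*} \widetilde{\omega} \in W$ existed. Then $q_{Y} \circ g^{\min} \colon Y^{\min} \to Q_{Y}$ factors birationally through $\psi$ by construction of $Q_{Y}$, hence through the variety $S$ of general type. Applying Theorem~\ref{thm:generalization_of_PS14} to $X$, using the positivity of the subsheaf of $\Omega^{1}_{X}$ inherited from $\Omega^{1}_{S}$ along the composition $X \rat Y^{\min} \to S$, forces $f^{*} \omega = (q_{Y} \circ f)^{*} \widetilde{\omega}$ to acquire a zero on $X$, contradicting $(\ast)_{g}$.

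With $\dim B_{0} \geq g$ in hand, I apply Theorem~\ref{thm:Iitaka_decomposition} to $\psi \colon Y^{\min} \to S$ together with $g^{\min} \colon Y^{\min} \to A$: since the general fiber $F^{\min}$ has image of dimension $\dim B_{0} \geq g$ in $A$, the theorem produces a quotient with connected kernel $q \colon A \to B$ with $\dim B \geq g$, an isogeny $B' \to B$ with kernel $G$, and a birational map $Y^{\min} \birat Z \times^{G} B'$ filling the requisite square. Since $Y \birat Y^{\min}$ this transports to the sought birational map $Y \rat Z \times^{G} B'$, and composing with the MRC $X \rat Y$ completes the diagram. The main obstacle in this plan is precisely the appeal to Theorem~\ref{thm:generalization_of_PS14}: ordinary Popa--Schnell is inadequate because $X$ has no Iitaka fibration of its own, so one must detect the general-type positivity from $S$ (the Iitaka base of $Y^{\min}$, not of $X$) and transport it back to $X$ through the MRC.
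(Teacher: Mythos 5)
Your overall architecture matches the paper's: reduce to Theorem~\ref{thm:Iitaka_decomposition} applied to the Iitaka fibration $\psi : Y^{\min} \to S$ of a good minimal model of the MRC base, where the only real content is showing $W \cap q_Y^* H^0(Q_Y, \Omega^1_{Q_Y}) = \{0\}$ so that the general fiber of $\psi$ has image of dimension $\ge g$ in $A$; and you correctly identified that ordinary Popa--Schnell is unavailable here and that the crux is invoking Theorem~\ref{thm:generalization_of_PS14} to transport positivity back to $X$ along the rational MRC map.

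However, the input you propose to feed into Theorem~\ref{thm:generalization_of_PS14} --- ``the positivity of the subsheaf of $\Omega^1_X$ inherited from $\Omega^1_S$'' --- does not work as stated, and this is exactly where the remaining work lies. The hypothesis of that theorem requires a \emph{line bundle} $\cN \embed P\Omega^k_X$, i.e.\ sitting inside the kernel of wedging with every $1$-form pulled back from $Q_Y$, together with $H^0(X, \cN^{\ot d} \ot f^* \L^{-1}) \neq 0$ for some ample $\L$ on $Q_Y$. The general-type positivity of $S$ lives in $\omega_S$, not in any rank-one piece of $\Omega^1_S$, and even if you produced a line bundle inside $\Omega^1_X$ you would still need it to be annihilated by wedging with all forms from $Q_Y$, which is a serious constraint for $k=1$. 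The correct choice, which the paper isolates as Corollary~\ref{cor:PS14_rational_map}, is $k = \dim{Y}$ and $\cN = g^* \omega_Y$ for the MRC map $g : X \rat Y$: this pullback is well defined because $g$ is defined in codimension one and $\omega_Y$ is a line bundle; it lands in $P\Omega^{\dim{Y}}_X$ because $\omega_Y$ is a top form on $Y$ and the relevant $1$-forms are pulled back through $Y$, so the wedge already vanishes on $Y$; and its positivity relative to $Q_Y$ is precisely the statement that $Y \to Q_Y$ factors birationally through the Iitaka fibration, i.e.\ $H^0(Y, \omega_Y^{\ot d} \ot f^* \L^{-1}) \neq 0$, which then pulls back to $X$ by dominance. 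With this substitution your argument closes and coincides with the paper's proof.
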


If the MRC fibration were a morphism, this would be an obvious consequence of Theorem~\ref{thm:Iitaka_decomposition_PLI} since the $1$-forms verifying $(\ast)_g$ are pulled back through $X \to Y \to A$ and hence also satisfy $(\ast)_g$ with respect to $Y \to A$. However, for a rational map $X \rat Y$ (even an almost holomorphic one) it is not clear if we can conclude $(\ast)_g$ for $Y \to A$. To prove the above result, we extract slightly more information from the proof of \cite{PS14} that, even for a \textit{rational} map $X \rat Y$, will transport cohomological information on $Y$ upwards to information on the zero locus of forms on $X$. First we prove a direct generalization of the main theorem of \cite{PS14}.

\begin{theorem} \label{thm:generalization_of_PS14}
Let $f : X \to A$ be in $\Var_A$. Consider the sheaf consisting of $k$-forms killed by $-\wedge f^* \omega$ for all $\omega \in H^0(A, \Omega_A^1)$ i.e.
\[ P\Omega_X^k \coloneq \ker{(\Omega_X^k \to \Omega_X^{k+1} \ot H^0(A, \Omega_A^1)^\vee)} \]
where the map is given by wedge with $1$-forms pulled back from $A$.
Suppose there is a line bundle $\cN \embed P\Omega_X^k$ and an ample $\L \in \Pic(A)$ so that $H^0(X, \cN^{\ot d} \ot f^* \L^{-1}) \neq 0$ for some $d \ge 1$. Then every $\omega \in H^0(A, \Omega_A^1)$ satisfies $Z(f^* \omega) \neq \varnothing$.
\end{theorem}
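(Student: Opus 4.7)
The strategy is to adapt the proof of the main theorem of~\cite{PS14}, with the line bundle $\mathcal{N}$ playing the role of the canonical bundle $\omega_X$ (for which the general type hypothesis supplied the required positivity). The condition $\mathcal{N}\hookrightarrow P\Omega_X^k$ isolates the exact structural feature of $\omega_X$ that the Popa--Schnell argument exploits: sections of $P\Omega_X^k$ are annihilated by wedging with any $1$-form pulled back from $A$, and this is what drives the Koszul-theoretic portion of their proof.

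Proceed by contradiction, assuming that some $\omega \in H^0(A, \Omega_A^1)$ satisfies $Z(f^*\omega) = \varnothing$. Then $\mathcal{O}_X \xrightarrow{f^*\omega} \Omega_X^1$ is a subbundle inclusion and the Koszul complex $(\Omega_X^\bullet, \wedge f^*\omega)$ is an exact complex of vector bundles. Consequently $\ker(\wedge f^*\omega \colon \Omega_X^k \to \Omega_X^{k+1}) = \mathrm{im}(\wedge f^*\omega \colon \Omega_X^{k-1} \to \Omega_X^k) \cong \bigwedge^{k-1}(\Omega_X^1/\mathcal{O}_X)$, and the inclusion $\mathcal{N}\hookrightarrow P\Omega_X^k$ factors through this kernel. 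Next, using the section $s \in H^0(X, \mathcal{N}^{\otimes d}\otimes f^*\mathcal{L}^{-1})$, build a degree-$d$ cyclic cover $\pi\colon Y\to X$ ramified along the zeros of $s$, after first passing to an isogeny $A' \to A$ on which $\mathcal{L}$ admits a $d$-th root $\mathcal{L}'$ so that the cover globalizes cleanly. On $Y$ the form $(f\circ\pi)^*\omega$ is still nowhere vanishing, the line bundle $\pi^*\mathcal{N}$ still factors through $P\Omega_Y^k$, and there is now a section of $\pi^*\mathcal{N}\otimes (f\circ\pi)^*(\mathcal{L}')^{-1}$, reducing us to the case $d=1$ on $Y$.

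At this point invoke generic vanishing for the pushforward $(f\circ\pi)_*(\pi^*\mathcal{N})$ on $A'$ together with the Koszul structure above to force $H^0(Y, \pi^*\mathcal{N}\otimes(f\circ\pi)^*(\mathcal{L}')^{-1}) = 0$, contradicting the construction. The main obstacle is exactly this final Hodge-theoretic step: verifying that the generic vanishing input (Hacon's theorem, or its Chen--Jiang refinement) applies to a direct image of $\mathcal{N}$ in place of the canonical bundle. This is where the subsheaf condition $\mathcal{N}\subset P\Omega_X^k$ does the crucial work, supplying the wedge-annihilation relations that ordinarily come for free when $k = \dim X$ and $\mathcal{N} = \omega_X$; once these relations are in place, the Koszul decomposition combined with the ampleness of $\mathcal{L}$ closes the argument as in \cite{PS14}.
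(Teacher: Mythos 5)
The cyclic-cover reduction and the identification of $\mathcal{N}\subset P\Omega_X^k$ as the stand-in for $\omega_X\subset\Omega_X^n$ are both on target, but the step you yourself flag as "the main obstacle" is the entire content of the theorem, and the mechanism you describe for it is not the one that works. You propose to derive a contradiction by showing $H^0(Y,\pi^*\mathcal{N}\otimes(f\circ\pi)^*(\mathcal{L}')^{-1})=0$ via generic vanishing; but that group is nonzero by the very construction of the cover, and generic vanishing theorems (Hacon, Chen--Jiang) say nothing about $H^0$ of a twist by the inverse of an ample bundle. The actual argument is not a contradiction of this shape at all. One directly verifies the hypotheses of \cite[Proposition~10.2]{PS14}: setting $B=\mathcal{N}\otimes f^*\mathcal{L}^{-1}$ and letting $\phi:Y\to X$ resolve the cyclic cover, the adjoint map $C_X\otimes B^{-1}\to\mathbf{R}\phi_*C_Y$ of complexes on $X\times H^0(A,\Omega_A^1)$ yields, after applying $R^{k-n}f_*$, a graded $\mathrm{Sym}(T_A)$-submodule $\mathcal{F}_\bullet\subset\mathrm{gr}^F_\bullet\,\mathcal{H}^{n-k}h_+(\mathcal{O}_Y,F)$ whose support lies over $Z_f$ and whose degree-$(g-k)$ piece contains $\mathcal{L}\otimes f_*\mathcal{O}_X$, because $P\Omega_X^k$ is the lowest cohomology sheaf of the $(g-k)$-graded piece of $C_X$ and $f^*\mathcal{L}\hookrightarrow\mathcal{N}\otimes B^{-1}\hookrightarrow P\Omega_X^k\otimes B^{-1}$ stays injective under $f_*$. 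Generic vanishing then enters only through the local freeness (in codimension one) of the Fourier--Mukai transform of $\mathrm{gr}^F\mathcal{M}$, which together with the positivity of $\mathcal{L}$ forces $Z_f\to H^0(A,\Omega_A^1)$ to be surjective. None of this is in your sketch, and your opening Koszul-exactness observation (that $P\Omega_X^k$ lands in the image of $\wedge f^*\omega$ when $f^*\omega$ is nowhere vanishing) plays no role in the proof and does not lead anywhere on its own.

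A secondary point: the subtlety of general $k$ is precisely that one must track which graded piece and which cohomological degree of $C_X$ the sheaf $P\Omega_X^k$ lives in (it is $\mathcal{H}^{k-n}$ of the degree-$(g-k)$ part), so that the image submodule $\mathcal{F}^{k-n}_{g-k}$ is computed by an honest $f_*$ of a kernel rather than by a spectral sequence with possible cancellation. Your proposal does not engage with this, so even granting the PS14 template, the case $k<\dim X$ would remain unjustified.
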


\subsection{Generic Nonvanishing Along Rational Maps}

Here we recall some ingredients introduced in \cite{PS14}. In what follows, we notate $n = \dim{X}$ and $g = \dim{A}$.

\begin{defn}
Let $A$ be an abelian variety and $\Var_A$ the slice category of smooth projective varieties over $A$. For any $f : X \to A$ in $\Var_A$, there is a natural complex $C_X$ in $D^{\flat}_{\mathrm{Coh}}(f^* T^* A)$
\[ C_X \coloneq \Big[ \pi^* \struct{X} \to \pi^* \Omega_X^1 \to \pi^* \Omega_X^2 \to \cdots \to \pi^* \Omega_X^n \Big] \]
supported in degrees $[-n, 0]$, where $\pi : f^* T^* A \to X$ denotes the projection. The differential is induced by the adjoint of pullback: $\struct{X} \to \Omega_X^1 \ot f^* \T_A$. Concretely, given a basis $\omega_1, \dots, \omega_g \in H^0(A, \Omega_A)$ and letting $s_1, \dots, s_g \in H^0(A, T_A)$ denote the dual basis; the differential is given by the formula
\[ \Omega^p_X \ot \Sym_{n}{(\T_A)} \to \Omega^{p+1}_X \ot \Sym_{n+1}{(\T_A)} \quad \quad \theta \ot s \mapsto \sum_{i = 1}^g (\theta \wedge f^* \omega_i) \ot s_i s. \]
where $\omega_1, \dots, \omega_g \in H^0(A, \Omega_A)$ form a basis and $s_1, \dots, s_g \in H^0(A, T_A) = H^0(A, \Omega_A)^\vee$ denotes the dual basis. Notice that this differential is compatible with the grading on $\struct{f^* T^* A}$ if we apply proper shifts to give a $\Gm$-equivariant structure:
\[  \struct{T^* A}(-g) \to \pi^* \Omega_X^1 \ot \struct{f^* T^* A}(1-g) \to \pi^* \Omega_X^2 \ot \struct{f^* T^* A}(2-g) \cdots \to \Omega_X^n \ot \struct{f^* T^* A}(n-g) \]
\end{defn}

\begin{rmk}
Since $T^* A = A \times V$, where $V = H^0(A, \Omega_A^1)$, we will often identify $C_X$ with the complex of coherent graded $\struct{X} \ot S_{\bullet}$-modules on $X$ obtained by taking the second projection
\[ C_{X, \bullet} = p_{2*} C_X = \Big[ \struct{X} \ot S_{\bullet -g} \to \Omega_X^1 \ot S_{\bullet - g + 1} \to \cdots \to \Omega_X^n \ot S_{\bullet - g + n} \Big] \]
where $S_\bullet \coloneq \Sym_{\bullet}{(V^\vee)}$ is the graded algebra of global multi-tangent fields on $A$.
\end{rmk}

\noindent
$C_X$ has two important interpretations that motivate its study. First, the dual (shifted by $[2 \dim{X}])$ of the Kozul resolution for the structure sheaf of the zero section of $\pi : T^* X \to X$ is the complex induced by the tautological section of $\pi^* \Omega_X^1$
\[ \pi^* \struct{X} \to \pi^* \Omega_X^1 \to \pi^* \Omega_X^2 \to \cdots \to \Omega_X^n \]
meaning $C_X$ is the pullback of this complex under $\d{f} : f^* T^* A \to T^* X$. Hence (c.f.\ \cite[Lemma~14.1]{PS14})
\[ \Supp{C_X} = Z_f = \{ (x, \omega) \in X \times V \mid (f^* \omega)(T_x X) = 0 \} \} \]
Moreover, $C_X$ computes the associated graded of the direct image for the trivial Hodge module $(\struct{X}, F)$ 
\[ \gr^F_\bullet f_+ (\struct{X}, F) = \Rbf f_* C_{X, \bullet} \]
Furthermore, the cohomology ``commutes'' with $\gr^F_\bullet$ so that $C_{X, \bullet}$ also computes the graded parts of the decomposition of $f_+(\struct{X}, F)$
\[ \gr^F_\bullet \cH^i f_+ (\struct{X}, F) = \Rbf^i f_* C_{X, \bullet} \]
for each $i$ by \cite[Proposition 2.11]{PS13}. Taking the support inside $T^* A$ recovers Kashiwara's estimate on the characteristic variety: $\mathrm{Ch}(f_{+} \struct{X}) = \Supp{\left(\Rbf (f \times \id_V)_* C_X\right)} \subset (f \times \id_V)(Z_f)$.
Towards applying generic vanishing results (c.f. \cite[Propostion 10.2]{PS14} and the strategy of \cite{Villadsen21}) write $C^\alpha_{X, \bullet} \coloneq C_{X, \bullet} \ot f^* \alpha$ for $\alpha \in \Pic^0_A$. We will often consider the sheaves $R^i p_{2*} C_{X,\bullet}^\alpha$ on $V$, where $p_2 : X \times V \to V$ is the projection. Varying over $\alpha$ these can be packaged together into a complex of sheaves on $\hat{A} \times V$ obtained by the Fourier-Mukai transform:
\[ E_X = \Rbf \Phi_P(\Rbf f_* C_{X}) = \Rbf (p_{23})_* \left( [p_{13}^* \Rbf f_* C_X] \ot^{\LL} p_{12}^* \cP \right) \]
using the projections on $A \times \hat{A} \times V$ and the Poincar\'{e} bundle $\cP$ on $A \times \hat{A}$. Since $p_{12}^* \cP$ is flat for $p_{23}$, formation of this complex commutes with arbitrary base change. In particular, for $g_{\alpha} : \{ \alpha \} \times V \to \hat{A} \times V$ we get,
\[ \Lbf g_{\alpha}^* E_X = \Rbf (p_{2})_* \left( \Rbf f_* C_X \ot^{\Lbf} p_1^* \alpha \right) = \Rbf (p_{2})_* \Rbf f_* \left( C_X \ot^{\Lbf} f^* \alpha \right) = \Rbf p_{2*} C_X^\alpha \] 
where $p_1, p_2$ are the projections of $A \times V$. Generic vanishing results \cite[Proposition 4.15]{PS13} or Simpson's description of the cohomology jump loci on the moduli space of rank 1 Higgs bundles shows that $\Lbf g_\alpha^* E_X$ has locally free cohomology sheaves for general $\alpha \in \Pic^0_A$. 
\par
Therefore, $\H^i(E_X)$ are locally free away from codimension $2$ on $\hat{A} \times V$ (see \cite[Theorem 1.8]{PS13} and \cite[Proof of Proposition 10.2]{PS14}) and with support contained in $\hat{A} \times Z_f$. Hence to show that $\Supp{E_X}$ dominates $V$ it would suffice to show that any $\H^i(E_X)$ is nonzero at the generic point. We cannot quite do this, instead a cyclic covering construction will produce a Hodge module $\cM$ on $A$ whose $\gr_\bullet^F \cM$ is computed as above so its Fourier-Mukai transform has the same local freeness property. Then we produce a nonzero graded submodule $\F \subset \gr_\bullet^F \cM$ supported on $\hat{A} \times Z_f$ which is enough to conclude \cite[Proposition 10.2]{PS14}.

The following lemma illustrates some advantageous properties of $(R^{k-n} p_{2*} C_X^\alpha)$ that lead to the method of \cite{PS14} working for all $k$.

\begin{lemma} \label{lemma:dominant_morphism_functoriality}
Let $\phi :  X' \to X$ be a dominant (resp.\ birational) morphism in $\Var_A$. The induced map,
\[ C_{X}[n'-n] \to \Rbf \phi_* C_{X'} \]
induces an injection (resp.\ isomorphism) for each $k \ge 0$ and $\alpha \in \Pic^0_A$ 
\[ (R^{k-n} p_{2 *} C_{X}^\alpha)_{g-k} \to (R^{k-n'} p_{2 *} C_{X'}^\alpha)_{g-k} \]
of the graded parts in degree $g-k$ where $n' = \dim{X'}$ and $n = \dim{X}$.
\end{lemma}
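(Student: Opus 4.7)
The plan is to identify both graded pieces with $H^0$ of $P\Omega^k$ (twisted by a line bundle pulled back from $A$), after which the statement reduces to the standard injectivity (resp.\ isomorphism) of the pullback of $k$-forms along a dominant (resp.\ birational) morphism between smooth varieties.

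First, I would unwind the weight $(g-k)$-graded piece of $C_{X,\bullet}$: the term $\Omega_X^j\otimes S_{j-k}$ sitting at cohomological position $j-n$ vanishes for $j<k$, so $C_{X,g-k}$ is supported in cohomological degrees $[k-n,0]$ with lowest term $\Omega_X^k$ in degree $k-n$. The hypercohomology spectral sequence applied to $C_{X,g-k}\otimes f^*\alpha$ has its unique contribution to total degree $k-n$ at $(p,q)=(k-n,0)$, and all higher differentials out of that position land in bidegrees with $q<0$; combined with the left-exactness of $H^0$, this gives
\[
(R^{k-n}p_{2*}C_X^\alpha)_{g-k}\;=\;\ker\!\Big(H^0(X,\Omega_X^k\otimes f^*\alpha)\xrightarrow{\wedge f^*\omega_i}H^0(X,\Omega_X^{k+1}\otimes V^\vee\otimes f^*\alpha)\Big)\;=\;H^0(X,P\Omega_X^k\otimes f^*\alpha),
\]
and the analogous identification on $X'$ (replacing $n$ by $n'$ and $f$ by $f\circ\phi$) identifies the target, taking into account the shift $[n'-n]$. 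Under these identifications, the morphism $C_X[n'-n]\to R\phi_*C_{X'}$ is induced term by term by the pullback $\phi^*\colon\Omega_X^j\to\phi_*\Omega_{X'}^j$, which commutes with wedging by $f^*\omega$ because $\phi^*f^*\omega=(f\circ\phi)^*\omega$; hence the induced map on cohomology is literally the pullback of $k$-forms $\phi^*\colon H^0(X,P\Omega_X^k\otimes f^*\alpha)\to H^0(X',P\Omega_{X'}^k\otimes(f\circ\phi)^*\alpha)$.

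In the dominant case, generic smoothness in characteristic zero yields a dense open of $X'$ on which $\phi$ is smooth, so $\phi^*\Omega_X^k\hookrightarrow\Omega_{X'}^k$ is injective there; torsion-freeness of $\Omega_X^k$ and $\Omega_{X'}^k$ upgrades this to a global injection $\phi^*\colon\Omega_X^k\to\phi_*\Omega_{X'}^k$, and restricting to the subsheaf $P\Omega_X^k$ and taking $H^0$ preserves injectivity. In the birational case, the image in $X$ of the exceptional locus has codimension at least $2$, so $\phi^*$ is an isomorphism of sheaves over a codim-$2$ open; since $\Omega_X^k$ is locally free and $\phi_*\Omega_{X'}^k$ is reflexive (Hartogs on the smooth variety $X$ applied to the locally free sheaf $\Omega_{X'}^k$), the isomorphism extends globally, yielding the stated isomorphism after restricting to $P\Omega^k$ and applying $H^0$. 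The main obstacle is the spectral-sequence bookkeeping in the identification step — verifying that the edge map genuinely identifies the graded piece with $H^0(X,P\Omega_X^k\otimes f^*\alpha)$ rather than with a subquotient — after which the rest is a formality about pullbacks of differential forms.
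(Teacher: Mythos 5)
Your argument is correct and takes essentially the same route as the paper's proof: both identify the degree-$(g-k)$ graded piece of $R^{k-n}p_{2*}$ with the kernel on global sections of the first differential (i.e.\ $H^0(X, P\Omega_X^k \otimes f^*\alpha)$) via the hypercohomology spectral sequence for the complex supported in degrees $[k-n,0]$, and then conclude by injectivity of pullback of forms from generic smoothness in the dominant case and by the Hartogs extension argument in the birational case.
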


\begin{proof}
The adjoint of the natural map is induced by pullback on differential forms $\phi^* \Omega_X \to \Omega_{X'}$. To be consistent with the cohomological degrees of $C_X$ (since $\Omega_X^k$ is placed in degree $-(n-k)$) we must shift by $(n' - n)$. However, no shift is needed in the ``cotangent space direction'' to make $C_{X}[n'-n] \to \Rbf \phi_* C_{X'}$ a map of graded complexes. The $g-k$ graded part of the complex $C_{X,\bullet}^\alpha$ looks like:
\begin{center}
\begin{tikzcd}
0 \arrow[r] & \Omega^{k}_X \ot S_0 \ot f^*\alpha \arrow[r] & \cdots \arrow[r] & \Omega^n_X \ot S_{n-k} \ot f^* \alpha \arrow[r] & 0
\end{tikzcd}
\end{center}
supported in degrees $[-(n-k), 0]$. Therefore, by the hypercohomology spectral sequence, $R^{k-n} p_{2*}$ is computed by taking the kernel of global sections in degree $-(n-k)$. Explicitly, 
\[ (R^{k-n} p_{2*} C_X^\alpha)_{g-k} = \ker{\left( H^0(X, \Omega^k_X \ot f^* \alpha) \ot S_0 \to H^0(X, \Omega^{k+1}_X \ot f^* \alpha) \ot S_1 \right)} \]
The same is true of $R^{k-n'} p_{2*} C_{X', g-k} = R^{k-n'} p_{2*} \R \phi_* C_{X', g-k}$. Applying $R^{k-n'}$ to $C_{X}[n'-n] \to \Rbf \phi_* C_{X'}$ (recalling the shift on the source) and taking $g-k$ graded piece we obtain
\begin{center}
    \begin{tikzcd}
    0 \arrow[r] & (R^{k-n} p_{2*} C_X^\alpha)_{g-k} \arrow[d, dashed] \arrow[r] & H^0(X, \Omega^k_X \ot f^* \alpha) \ot S_0 \arrow[r] \arrow[d] & H^0(X, \Omega^{k+1}_X \ot f^* \alpha) \ot S_1 \arrow[d]
    \\
    0 \arrow[r] & (R^{k-n} p_{2*} C_X^\alpha)_{g-k} \arrow[r] & H^0(X, \Omega^k_X \ot f^* \alpha) \ot S_0 \arrow[r] & H^0(X, \Omega^{k+1}_X \ot f^* \alpha) \ot S_1 
    \end{tikzcd}
\end{center}
By generic smoothness, the pullback of forms along $\phi$ is injective so the downward arrows are injections, and hence so is the dashed arrow, which is the map induced by the natural pullback. If $\phi$ is birational, the downward maps are isomorphism by the usual Hartog extension argument.
\end{proof}

Now we give the proof of Theorem~\ref{thm:generalization_of_PS14} which is a direct generalization of the main theorem of \cite{PS14} using the same method. Recall that we set
\[ P\Omega_X^k \coloneq \ker{(\Omega_X^k \ot S_0 \to \Omega_X^{k+1} \ot S_1)} = \cH^{k-n}(C_{X, g-k}) \]
and are given a line bundle subsheaf $\cN \embed P\Omega_X^k$ which is positive with respect to $f : X \to A$ meaning that $H^0(X, \cN^{\ot d} \ot f^* \L^{-1}) \neq 0$ for some $d > 0$ and ample $\L \in \Pic(A)$. Then we need to show that every global $1$-form pulled back along $f$ has a zero. 

\begin{proof}[Proof of Theorem~\ref{thm:generalization_of_PS14}]
We almost exactly repeat the proof in \cite{PS14} and when $k = n$ our argument is identical.
Recall that $C_X$ is supported on $Z_f \subset X \times V$ the locus of pairs $(x, \omega)$ so that $f^* \omega$ is zero at $x$. We exploit this in conjunction with the generic vanishing results on $\R f_* C_X$ surveyed above. Explicitly, \cite[Proposition 10.2]{PS14} shows that if we can construct a Hodge module $(\cM, F)$ and a graded $\Sym{\T_A}$-submodule $\F_\bullet \subset \gr^F_\bullet \cM$ such that
\begin{enumerate}
    \item there is a morphism $h : Y \to A$ from a smooth projective variety, such that $(\cM, F)$ is a direct summand of some $\cH^i h_+(\struct{Y}, F)$,
    \item the support of $\F$ on $T^* A$ is contained in $(f \times \id)(Z_f) \subset T^* A$, and
    \item for some $\ell \in \Z$, the sheaf $\F_{\ell}$ contains $\L \ot f_* \struct{X}$ as a subsheaf, where $\L$ is some ample line bundle on $A$,
\end{enumerate}
then $Z_f \to V$ is surjective proving the theorem.
\par 
First, as in \cite[Lemma 11.1]{PS14} we can find an isogeny $A \to A$ so that forming the fiber product gives an \etale cover $\tau : X' \to X$ such that $(\tau^* \cN \ot \tau^* f^* \L^{-1})^{\ot d}$ has a nonzero section for some $d \ge 0$ and some ample $\L \in \Pic(A)$. Of course, replacing $X$ by $X'$ it suffices to prove the result for $X'$, since $f^* \omega$ will be pulled back through the \etale cover $\tau$. 
\par 
Let $B = \cN \ot f^* \L^{-1}$ so by assumption there is a nonzero section $s \in H^0(X, B^{\ot d})$. We use this section to form a $d$-th cyclic cover $X_d \to X$ and resolve singularities $Y \to X_d$ to get a generically finite cover $\phi : Y \to X$ such that there exists a nonzero section $s' \in H^0(Y, \phi^* B)$ and that $(s')^{\ot d} = \phi^* s$. This gives a diagram
\begin{center}
    \begin{tikzcd}
        Y \arrow[rrd, bend right, "h"] \arrow[rr, bend left, "\phi"] \arrow[r] & X_d \arrow[r] & X \arrow[d, "f"]
        \\
        & & A
    \end{tikzcd}
\end{center}
producing a map
\[ \phi^* (C_X \ot B^{-1}) \to \phi^* C_X^\alpha \to C_Y \]
whose adjoint we denote by 
\[ \varphi : C_X \ot B^{-1} \to \Rbf \phi_* C_Y. \]
Now applying $R^{k-n} f_*$ for the projection $p : X \times V \to V$, we take the image to get
\begin{align*}
     \F_\bullet^{k-n} & \coloneq \im{(R^{k-n} f_* \varphi)} = \im{(R^{n-k} f_* (C_{X,\bullet} \ot B^{-1}) \to R^{n-k} h_* C_{Y,\bullet})} 
     \\ 
     & \cong \im{(\gr^F_\bullet \cH^{n-k} f_+(\struct{X}, F) \to \gr^F_\bullet \cH^{n-k} h_+(\struct{X}, F))}.
\end{align*}
Hence set $\cM = \cH^i h_+(\struct{Y}, F)$ which is a Hodge module whose graded part is $R^{n-k} h_* C_{Y,\bullet}$ by \cite[Proposition 2.11]{PS13} and projectivity of $h_+$. Then $\F^{k-n}_\bullet \subset \gr^F_\bullet \cM$ becomes a graded $\Sym{\T_A}$-submodule. Furthermore, the support of $R^{k-n} (f \times \id_V)_* (C_X \ot B^{-1})$ is contained in $(f \times \id_V)(Z_f)$ and hence the same is true of $\F^{k-n}$. Finally, we compute the $(g-k)$-graded part of $\F^{k-n}_\bullet$. Since  $\varphi : C_{X,g-k} \ot B^{-1} \to \Rbf \phi_* C_{Y,g-k}$ is a map of complexes supported in degrees $\ge (k-n)$, applying $R^{k-n} f_*$ we get
\[ \F^{k-n}_{g-k} \coloneq \im{(f_* (P\Omega_X^k \ot B^{-1}) \to h_* P \Omega_Y^k)} \]
since $f_*$ commutes with taking kernels. This map is injective since pushforward preserves injectivity. Furthermore, the inclusions 
\[ f^* \L \embed \cN \ot B^{-1} \embed P \Omega_X^k \ot B^{-1} \]
also remains injective when applying $f_*$ so we obtain an inclusion
\[ \L \ot f_* \struct{X} = f_* f^* \L \embed \F^{k-n}_{g-k} \] 
verifying the properties listed above needed to conclude the desired result.
\end{proof}

\subsection{Application to the MRC Fibration}

In this section we demonstrate how the results of the previous section imply imply trasfer of vanishing information about $1$-forms upwards along rational maps. Then we apply the story to the MRC fibration to conclude Theorem~\ref{thm:main_MRC}.

\begin{cor} \label{cor:PS14_rational_map}
Let $g : X' \rat X$ be a dominant rational map in $\Var_A$. Suppose that $X \to A$ factors birationally through the Iitaka fibration of $X$. Then $f : X' \to A$ does not satisfy $(\ast)_1$ i.e. every $\omega \in H^0(A, \Omega_A)$ satisfies $Z(f^* \omega) \neq \varnothing$.
\end{cor}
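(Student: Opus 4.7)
The plan is to apply Theorem~\ref{thm:generalization_of_PS14} directly to $f : X' \to A$ by producing a sub-line-bundle $\widetilde{\omega} \embed P\Omega_{X'}^{n}$, where $n = \dim X \le \dim X'$, together with a nonzero global section of a suitable twist $\widetilde{\omega}^{\otimes N} \otimes f^* L^{-1}$. We obtain $\widetilde{\omega}$ by pulling back $\omega_X$ along $g$ and Hartogs-extending it across the indeterminacy locus; the required section comes from the positivity furnished by the Iitaka factorization of $X \to A$.

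First, after replacing $X$ by a smooth projective birational model (which affects neither the hypothesis nor the conclusion), we may assume that the Iitaka fibration $\phi : X \to X^{\can}$ is a morphism and that $f_X$ factors as $X \xrightarrow{\phi} X^{\can} \xrightarrow{f_S} A$. Taking $m$ so that $mK_X = \phi^* H + F$ with $H$ very ample on $X^{\can}$ and $F \ge 0$ the fixed part, and choosing $d$ large enough that $dH - f_S^* L$ is ample on $X^{\can}$ for a fixed ample $L \in \Pic(A)$, we see that the divisor class of $\omega_X^{\otimes dm} \otimes f_X^* L^{-1}$ equals $\phi^*(dH - f_S^* L) + dF$, which is effective. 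Hence this line bundle admits a nonzero global section $s$.

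Since $g$ is a rational map from the smooth projective variety $X'$ to a projective variety, its indeterminacy locus $T \subset X'$ has codimension at least $2$. Setting $U = X' \sm T$, the restriction $g|_U$ is a morphism, and we let $\widetilde{\omega} \in \Pic(X')$ be the unique reflexive (hence line bundle, as $X'$ is smooth) extension of $(g|_U)^* \omega_X$ across $T$, and $\widetilde{s} \in H^0(X', \widetilde{\omega}^{\otimes dm} \otimes f_{X'}^* L^{-1})$ the Hartogs extension of $(g|_U)^* s$. The natural pullback of top forms gives a sheaf map $(g|_U)^* \omega_X \to \Omega_U^n$ which is injective (its kernel, being torsion in a line bundle, is zero) and whose image lies in $P\Omega_U^n$: for any $\alpha \in \omega_X$ and $\omega \in H^0(A, \Omega_A^1)$, the wedge $\alpha \wedge f_X^* \omega$ lives in $\Omega_X^{n+1} = 0$, and pullback preserves this vanishing. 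Because $P\Omega_{X'}^n$ is reflexive on smooth $X'$ (being the kernel of a map of locally free sheaves), this inclusion extends uniquely to $\widetilde{\omega} \embed P\Omega_{X'}^n$ on all of $X'$.

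Feeding $\cN = \widetilde{\omega}$, $k = n$, and the section $\widetilde{s}$ into Theorem~\ref{thm:generalization_of_PS14} applied to $f_{X'} : X' \to A$ then yields that every $\omega \in H^0(A, \Omega_A^1)$ has $Z(f^* \omega) \ne \varnothing$. The only genuinely delicate point is coordinating the three Hartogs extensions --- of $\widetilde{\omega}$, of $\widetilde{s}$, and of the inclusion $\widetilde{\omega} \embed P\Omega_{X'}^n$ --- so that they are mutually consistent; this reduces cleanly to the reflexivity of the sheaves involved combined with the codimension-at-least-$2$ bound on the indeterminacy of $g$.
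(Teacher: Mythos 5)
Your proposal is correct and follows essentially the same route as the paper: pull back $\omega_X$ along $g$ as a line bundle across the codimension-$\ge 2$ indeterminacy locus, observe that top forms on $X$ land in $P\Omega^{k}_{X'}$ with $k=\dim X$ because $\theta\wedge f^*\omega=0$ for degree reasons, use the Iitaka factorization to produce the section of $\omega_X^{\otimes d}\otimes f^*\L^{-1}$, and invoke Theorem~\ref{thm:generalization_of_PS14}. The only cosmetic point is that ``$\phi^*(dH-f_S^*L)+dF$ is effective'' needs a further multiple (ample only gives effectivity of some power), which is harmless since the theorem permits arbitrary $d$ and arbitrary ample $\L$ on $A$.
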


\begin{proof}
By hypothesis, the diagram
\begin{center}
    \begin{tikzcd}
     X' \arrow[r, "f'"] \arrow[d, dashed, "g"'] & A
     \\
     X \arrow[ru, "f"']
    \end{tikzcd}
\end{center}
commutes. There is a well-defined sheaf $g^* \omega_X$ because $g$ is defined in codimension $1$ and $\omega_X$ is a line bundle. Furthermore, pullback induces an inclusion $g^* \omega_X \embed P \Omega^k_{X'} \subset \Omega^k_{X'}$ where $k \coloneq \dim{X}$. This holds because the forms $f'^* \omega_i$ are pulled back through $g$ and therefore, for any local section $\theta$ of $\omega_X$ we have
\[ g^* \theta \wedge f'^* \omega_i = g^* (\theta \wedge f^* \omega_i) = 0 \]
since $\theta$ is a top form. Hence the natural pullback $g^* \omega_X \to \Omega_{X'}^k$, (which extends from the locus of definition since these are vector bundles) factors through the subsheaf $P \Omega_X^k$. Furthermore, the condition that $f : X \to A$ factors birationally through the Iitaka fibration is equivalent to $H^0(X, \omega_X^{\ot d} \ot f^* \L^{-1}) \neq 0$ for some $d \ge 0$. Since $g$ is dominant $H^0(X', (g^* \omega_X)^{\ot d} \ot f'^* \L^{-1}) \neq 0$, $f' : X' \to A$ satisfies the hypotheses of Theorem~\ref{thm:generalization_of_PS14} which concludes the argument. 
\end{proof}

Now we prove the main result of the section.

\begin{proof}[Proof of Theorem~\ref{thm:main_MRC}]
We want to apply Theorem~\ref{thm:Iitaka_decomposition} to the Iitaka fibration of a good minimal model $\psi : Y^{\min} \to S$ and $f : Y^{\min} \to A$. Therefore, it suffices to show that the general fiber of $\psi$ has image of dimension $\ge g$ in $A$. We repeat the argument proving Lemma~\ref{lemma:dimesnion_and_PLI_forms}. Let $W \subset H^0(A, \Omega_A)$ be a $g$-dimensional subspace of $1$-forms verifying $(\ast)_g$ for $f : X \to A$. Let $F$ be the generic fiber of $\psi$ and consider $Q = \coker{(\Alb_F \to \Alb_X)}$. Since $Y \to Q$ factors through the Iitaka fibration the diagram
\begin{center}
    \begin{tikzcd}
        X \arrow[r, "f"] \arrow[d, dashed] & Q
        \\
        Y \arrow[ru]
    \end{tikzcd}
\end{center}
satisfies the hypotheses of Corollary~\ref{cor:PS14_rational_map}. Therefore $X \to Q$ does not satisfy $(\ast)_1$.  Since $W$ consists of nowhere vanishing forms, $W \cap H^0(Q, \Omega_Q) = \{ 0 \}$ meaning 
\[ \dim{W} + \dim{Q} \le \dim{\Alb_X} \]
so the image of $\Alb_F$ has dimension at least $\dim{W} = g$. 
\end{proof}

\bibliographystyle{alpha}
\bibliography{refs}

\end{document}